\def\N{\mathbb N}
\def\stab{\mathrm{Stab}}
\def\st{\mathrm{st}}
\def\U{\mathfrak{U}}
\def\H{\mathcal{H}}
\theoremstyle{plain}
\newtheorem{theorem}{Theorem}[section]
\newtheorem{proposition}[theorem]{Proposition}
\newtheorem{fact}[theorem]{Fact}
\newtheorem{lemma}[theorem]{Lemma}
\newtheorem{cor}[theorem]{Corollary}
\theoremstyle{definition}
\newtheorem{definition}[theorem]{Definition}
\newtheorem{example}{Example}
\def\Ind#1#2{#1\setbox0=\hbox{$#1x$}\kern\wd0\hbox to 0pt{\hss$#1\mid$\hss}
\lower.9\ht0\hbox to 0pt{\hss$#1\smile$\hss}\kern\wd0}
\def\Notind#1#2{#1\setbox0=\hbox{$#1x$}\kern\wd0\hbox to 0pt{\mathchardef
\nn="3236\hss$#1\nn$\kern1.4\wd0\hss}\hbox to 0pt{\hss$#1\mid$\hss}\lower.9\ht0
\hbox to 0pt{\hss$#1\smile$\hss}\kern\wd0}
\title[On compactifications and product-free sets]{On compactifications and product-free sets}
\author{Daniel Palac\'in}
\thanks{I would like to thank Emmanuel Breuillard and Gabriel Conant for helpful conversations and comments. 
\newline Research partially supported by the program MTM2017-86777-P and the European Research Council grant 338821.}
\address{Abteilung f\"ur Mathematische Logik, Mathematisches Institut, Albert-Ludwig-Universit\"at Freiburg; Ernst-Zermelo-Str. 1, 79104 Freiburg, Germany}
\email{palacin@math.uni-freiburg.de}
\begin{document}

\begin{abstract}
A subset of a group is said to be product-free if it does not contain three elements satisfying the equation $xy=z$. We give a negative answer to a question of Babai and S\'os on the existence of large product-free sets in finite groups by model theoretic means. This question was originally answered by Gowers. Furthermore, we give a natural and sufficient model theoretic condition for a group to have a large product-free subset, as well as a model theoretic account of a result of Nikolov and Pyber on triple products. 
\end{abstract}

\maketitle

\section{Introduction}

Babai and S\'os \cite{BaSo} asked whether there exists a constant $c>0$ such that every finite group of order $n$ has a product-free set of size at least $cn$, where a product-free set of a group is a subset that does not contain three elements $x,y$ and $z$ satisfying $xy=z$. While they proved in \cite[Cororally 7.8]{BaSo} that a solvable group of order $n$ has a product-free set of size at least $2n/7$, a negative answer to their question was obtained by Gowers \cite{Gow}. 

The key parameter of a finite group that plays a fundamental role in Gowers proof is the minimum dimension of a non-trivial unitary representation of the given finite group. Using a result of Frobenius asserting that $\mathrm{PSL}_2(q)$ has no non-trivial unitary representation of dimension less than $(q-1)/2$, as well as facts concerning quasirandom bipartite graphs, Gowers showed that for a sufficiently large $q$ the group $\mathrm{PSL}_2(q)$ has no product-free subset of size $cn^{8/9}$, where $n$ is the order of the group $\mathrm{PSL}_2(q)$ and $c>0$ is a constant given beforehand. 

From a more abstract point of view, Gowers related the existence of low dimensional unitary representations with the existence of product-free sets of large density, by providing an upper bound for the size of a product-free set in terms of the minimum dimension of a non-trivial unitary representation. This is the content of \cite[Theorem 3.3]{Gow}.

An easy but remarkable consequence of this theorem, noticed by Nikolov and Pyber in \cite[Corollary 1]{NikPyb}, is the following: Given a group $G$ of order $n$ without a non-trivial unitary representation of dimension less than $d$, we have that $G=A^3$ for any subset $A$ of $G$ of size at least $n/\sqrt[3]{d}$.  We refer to \cite{Bre} for a distinct and shorter approach based on the non-abelian Fourier transform due to Breuillard. 

From the point of view of model theory, these results resemble some kind of amalgamation principle for definable ternary relations. This intuition comes from the work of Cherlin and Hrushovski \cite{ChHr} which later played a major role in the development of model-theoretic simplicity, see for instance \cite[Lemma 6.1.14]{ChHr} or its generalization to simple theories due to Pillay, Scanlon and Wagner \cite{PiScWa}. These latter results are obtained due to the existence of the so called stabilizer subgroup, a suitable normal subgroup of small index. In the context of nonstandard finite groups, which can be seen as suitable model theoretic limits of finite groups, the existence of such a subgroup was proven by Hrushovski \cite{Hrus1}, who  used it to establish links between model theory and finite combinatorics. Roughly speaking, Hrushovski's stabilizer theorem corresponds to a nonstandard version of a refinement of a theorem of Sanders \cite{San} from finite combinatorics; we refer to \cite{MaWa} for a model theoretic treatment of Sanders' theorem.

The aim of this paper it to give a model theoretic account of a qualitative analogue of the aforementioned results of Gowers, as well as the corollary obtained by Nikolov and Pyber, answering a question of Hrushovski, see \cite[Remark 3.3]{Hrus3}. 

We first see in Section \ref{S:PF} that the existence of the stabilizer subgroup allows us to answer negatively the original question of Babai and S\'os, giving an alternative proof to the one of Gowers. The reason for this is that in the nonstandard setting, the existence of a produt-free set of a given density is equivalent to the existence of a subgroup of bounded index. This result has the following translation to the finite setting:

\begin{theorem}
For any $c>0$ and any $m\ge 1$, there exists some $k=k(c,m)$ such that the following holds. Suppose that $G$ is a finite group of order $n$  with a product-free set $A$ of size at least $cn$. Then there are symmetric subsets 
$$
X_{m}\subseteq X_{m-1} \subseteq \ldots \subseteq X_1 \subseteq (AA^{-1})^2
$$
containing the identity and properly contained in $G$ with the property that $X_{i+1}^2 \subseteq X_i$ and such that $G$ is covered by at most $k$ many translates of $X_m$.
\end{theorem}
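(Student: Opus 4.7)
The plan is to argue by compactness, transferring the problem to a nonstandard finite group and applying Hrushovski's stabilizer theorem. Suppose, for contradiction, that the conclusion fails for some fixed $c > 0$ and $m \geq 1$: for each natural number $k$ there is a finite group $G_k$ of order $n_k$ and a product-free set $A_k \subseteq G_k$ with $|A_k| \geq c n_k$ admitting no chain $X_m \subseteq \ldots \subseteq X_1 \subseteq (A_k A_k^{-1})^2$ of symmetric subsets containing the identity, properly contained in $G_k$, with $X_{i+1}^2 \subseteq X_i$ and such that $G_k$ is covered by at most $k$ left translates of $X_m$. Form a non-principal ultraproduct on the index $k$: set $G = \prod_k G_k/\U$ and $A = \prod_k A_k/\U$, so that $G$ is a pseudofinite group, $A$ is an internal product-free subset of $G$, and the normalized counting measure of $A$ is at least $c$.

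Now invoke Hrushovski's stabilizer theorem \cite{Hrus1}, applied to $A$ in $G$. This produces a type-definable subgroup $H \leq G$ of bounded index with $H \subseteq (AA^{-1})^2$. The product-free condition on $A$ forces $H$ to be properly contained in $G$: since $H$ has bounded index and $A$ has positive measure, $A$ is, up to a set of vanishing measure, a union of cosets of $H$, so its image in the bounded quotient $G/H$ is a nonempty product-free subset. As no nontrivial group is product-free, $G/H$ must itself be nontrivial. Being a bounded-index type-definable subgroup of a pseudofinite group, $H$ is the intersection of a decreasing sequence of internal symmetric identity-neighborhoods $Y_1 \supseteq Y_2 \supseteq \ldots$, all contained in $(AA^{-1})^2$, which can be arranged so that $Y_{i+1}^2 \subseteq Y_i$; each $Y_i$ is proper in $G$ and is covered by a bounded number of left translates, as it contains $H$.

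Truncating at depth $m$, the chain $Y_1 \supseteq \ldots \supseteq Y_m$ descends, via the transfer principle for ultraproducts, to analogous chains of internal symmetric subsets in $G_k$ for almost every $k$, with a uniform finite bound on the covering number of $Y_m$. Taking $k$ large enough that this uniform bound is less than $k$ contradicts the choice of $(G_k, A_k)$, and so produces the required $k = k(c, m)$. The main delicate point is to realize the approximating sequence $(Y_i)$ simultaneously inside $(AA^{-1})^2$, respecting the squaring condition $Y_{i+1}^2 \subseteq Y_i$ and proper-containment, and to ensure that the bound on covering $G$ by translates is uniform across the ultraproduct; these are standard manipulations with bounded-index type-definable subgroups, but must be executed carefully.
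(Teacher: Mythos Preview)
Your overall strategy---contradiction, ultraproduct, Hrushovski's stabilizer theorem, compactness to extract the chain $X_1\supseteq\ldots\supseteq X_m$, and \L os to transfer back---matches the paper's. The gap is in your argument that $H$ is \emph{proper}. You assert that ``since $H$ has bounded index and $A$ has positive measure, $A$ is, up to a set of vanishing measure, a union of cosets of $H$'', and then deduce that the image of $A$ in $G/H$ is product-free. Neither step is justified. The stabilizer theorem applies to a wide \emph{type} $p$ extending $A$, not to the internal set $A$ itself, and it gives no $H$-invariance of $A$: in an ultraproduct of cyclic groups with $A$ a suitable interval, $A$ is product-free of positive measure yet is nowhere near a union of cosets of the relevant bounded-index subgroup. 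And even if one could make sense of an ``image'' of $A$ in $G/H$, product-freeness of $A$ says $ab\neq c$ for $a,b,c\in A$, not $ab\notin cH$; so the image need not be product-free.

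The paper obtains properness by a different mechanism, working entirely at the level of the type (Lemma~\ref{L:StabPF}). One uses the refined conclusion of the stabilizer theorem that $\stab(p)\setminus\st(p)$ is contained in a union of $M$-definable null sets. If $\stab(p)=G$, then the wide type $p$ cannot lie in this null locus, so $p\subseteq\st(p)\subseteq pp^{-1}$; but $p\subseteq pp^{-1}$ forces $pp\cap p\neq\emptyset$, contradicting that $p$ is product-free. This is the missing idea you need to replace your coset-image argument.
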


Note that the converse also holds. Namely, the existence of the set $X_3$ yields the existence of a product-free set of size $n/k$, since any set $X$ has a translate which is product-free whenever $XX^{-1}X$ is properly contained in $G$.  

This statement can be easily deduced using the stabilizer theorem due to Hrushovski. Furthemore, using the work of Gowers \cite{Gow}, or the structure theorem of approximate subgroups due to Breuillard, Green and Tao \cite{BGT}, we see that the above statement is also equivalent to the existence of abelian-by-bounded quotients. In contrast, regarding finite groups of a given exponent, a simpler argument yields the existence of bounded index subgroups. In the sequel this appears as Corollary \ref{C:FinExp}.

\begin{theorem}
For any $c,\epsilon>0$ and any $r$, there exists some $k=k(c,\epsilon,r)$ such that the following holds. Suppose that $G$ is a finite group of order $n$ and exponent $r$ and let $A$ be a product-free set of size at least $cn$. Then there is a proper subgroup $H$ of $G$ of index at most $k$ which is contained in $(AA^{-1})^2$.
\end{theorem}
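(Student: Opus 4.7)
The plan is to argue by contradiction via an ultraproduct, first applying the model-theoretic content behind Theorem 1.3 (Hrushovski's stabilizer theorem) to obtain a type-definable bounded-index subgroup inside $(AA^{-1})^2$, and then exploiting the finite exponent hypothesis to upgrade it to a genuine definable finite-index subgroup.

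Suppose the conclusion fails. Then for each $k\ge 1$ there exists a finite group $G_k$ of order $n_k$ and exponent $r$ with a product-free subset $A_k$ of density at least $c$ such that $(A_kA_k^{-1})^2$ contains no proper subgroup of $G_k$ of index at most $k$. A non-principal ultraproduct $G=\prod_k G_k/\U$ yields a pseudo-finite group still of exponent $r$, together with an internally definable product-free set $A=[A_k]_\U$ of pseudo-finite measure $\mu(A)\ge c$. The same invocation of Hrushovski's stabilizer theorem that underlies Theorem 1.3 produces a type-definable subgroup $H\le G$ of bounded index with $H\subseteq (AA^{-1})^2$; replacing $H$ by its normal core, which is the intersection of its boundedly many conjugates and is therefore still type-definable of bounded index and still contained in $(AA^{-1})^2$, I may assume $H$ is normal in $G$.

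The quotient $G/H$, endowed with the logic topology, is a compact Hausdorff topological group in which every element has order dividing $r$. Any such compact group is profinite: its identity component is a connected compact group of finite exponent, and written as an inverse limit of connected compact Lie groups it must be trivial, since a nontrivial connected compact Lie group contains a maximal torus of positive dimension with elements of arbitrarily large order. Consequently $G/H$ is totally disconnected, and $H$ is the intersection of the pullbacks $K_i$ of the open subgroups of $G/H$. Each $K_i$ is a finite-index subgroup of $G$ containing $H$, and since its complement is a finite union of type-definable cosets, $K_i$ itself is definable.

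By compactness of the type space applied to the inclusion $\bigcap_i K_i=H\subseteq (AA^{-1})^2$ into the definable set $(AA^{-1})^2$, some finite subintersection $K:=\bigcap_{i\in F}K_i$ already satisfies $K\subseteq (AA^{-1})^2$. Then $K$ is a definable subgroup of $G$ of finite index $N$ contained in $(AA^{-1})^2$. Transferring back through \L o\'s's theorem, for $\U$-almost every $k$ the group $G_k$ contains a subgroup of index at most $N$ inside $(A_kA_k^{-1})^2$, which is proper once $n_k>N$; choosing $k>N$ then contradicts the defining property of $G_k$. The main obstacle is the upgrade from the type-definable $H$ supplied by the stabilizer theorem to a genuine definable finite-index subgroup still inside $(AA^{-1})^2$, and the finite exponent hypothesis is used precisely here: without it one cannot force $G/H$ to be profinite and the compactness argument breaks down.
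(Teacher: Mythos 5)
Your overall strategy --- pass to an ultraproduct, invoke Hrushovski's stabilizer theorem to produce a type-definable bounded-index subgroup $H$ inside $(AA^{-1})^2$, use the finite-exponent hypothesis to show the logic-topology quotient is profinite, descend by compactness to a definable finite-index subgroup, and transfer back via \L o\'s --- is exactly the route the paper takes (Theorem~\ref{T:FinExp} and Corollary~\ref{C:FinExp}), except that you reprove the profiniteness of a compact Hausdorff group of finite exponent directly by Lie-theoretic reasoning where the paper cites Iltis. Also, the normal-core step is unnecessary: the group the stabilizer theorem produces is $\stab(p)=(G^*)^{00}_M$, which is already normal.

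There is, however, a genuine gap at the end. You obtain a definable finite-index subgroup $K\subseteq (AA^{-1})^2$ and claim the transferred subgroups of $G_k$ are ``proper once $n_k>N$.'' This does not follow: a subgroup of index at most $N$ in a group of order $n_k>N$ can perfectly well be the whole group (index $1$), and indeed $(AA^{-1})^2=G$ is possible for a product-free $A$ of positive density --- take a middle-third interval in a long cyclic group. The compactness argument alone can therefore hand you $K=G$, from which no contradiction arises. You must separately ensure $K$ is proper, and the source of properness is the product-free hypothesis through Lemma~\ref{L:StabPF}: a wide type $p$ extending $A(x)$ is product-free, so $\stab(p)$ is a \emph{proper} subgroup of bounded index, making $G^*/\stab(p)$ a \emph{non-trivial} profinite group; it therefore has a proper open subgroup whose pullback $K_{j}$ is a proper definable finite-index subgroup containing $\stab(p)$. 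Replacing $K$ by $K\cap K_{j}$ then gives a proper definable finite-index subgroup inside $(AA^{-1})^2$, after which the transfer by \L o\'s goes through as you intend.
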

 
The explanation from a model theoretic point of view has to do with the existence of a non-trivial definable group compactification associated to a given ultraproduct of finite groups. In fact, the existence of such a non-trivial group compactification also explains why in the finite exponent case one always obtains a proper subgroup of finite index. The reason is that in this case, the group compactification will be a profinite group.  

The notion of definable group compactification is carefully explained and studied in Section \ref{S:DefCom} and  \ref{S:FinExp}, which contain the main model theoretic content of the paper. In particular, in Section \ref{S:DefCom} we prove our main model theoretic results by showing in Theorem \ref{T:Main} that such a definable group compactification is non-trivial if and only if there exists a large product-free subset. Furthermore, in Theorem \ref{T:Main} we also prove the nonstandard version of Gowers Theorem \cite[Theorem 3.3]{Gow} as well as the derivation of exponent $3$ obtained by Nikolov and Pyber.

\section{A non-standard formulation}

As discussed in the introduction, for our purposes it is convenient to consider nonstandard finite groups. We assume the reader is familiarized with nonstandard methods, but we recall here some of the basic properties making special emphasis on definable issues.

In the literature one can find different ways to consider nonstandard mathematical objects. One natural way is to work in a nonstandard model of set theory, which is precisely the approach considered in \cite{Pil1}. However, this elegant construction to handle nonstandard objects might not be the easiest one for those readers not versed in mathematical logic. Consequently, we give a more specific procedure to consider nonstandard structures, assuming that the reader is familiar, up to some extent, with model theory. We first recall some basic notions. For a detailed treatment, we refer to Section 4 of \cite{vdD}, as well as to the Appendix of \cite{vdDGold} for another exposition. 

\subsection{Logical preliminaries} A {\em structure} $M$ consists of a family $(S_i)_{i\in I}$ of nonempty sets $S_i$, and of a family $(R_j)_{j\in J}$ of relations $R_j$ of $S_{i_1}\times \ldots \times S_{i_m}$ on these sets, with the finitely many indices $i_1,\ldots,i_m$ depending on $j$. We think of the sets $S_i$ as the underlying {\em sorts} or {\em basic sets} of the structure, and the relations $R_j$ as its {\em primitives}. It is assumed that equality is a
primitive binary relation in every sort. From a logical point of view, we regard the structure $M$ as an $I$-sorted structure in the language $L$ consisting of symbols for the primitive relations $R_j$ with $j\in J$.

For instance, a group or a ring can be seen as a $1$-sorted structure with the domain as a basic set and the graph of their algebraic operations as primitives. 

The {\em definable sets} of a structure $M$ are those subsets of Cartesian powers of $M$ that are obtained from the primitive relations in finitely many steps using the following operations: intersection, union, complement, Cartesian product and image under a co-ordinate projection. 
Alternatively, the definable sets are the solution sets of formulas $\phi(x_1,\ldots,x_n)$ in $L$, where each variable $x_i$ is of a specific sort. Furthermore, given an $I$-sorted subset $A=(A_i)_{i\in I}$ of $M$, we can expand the language by naming parameters, that is to say for each element $a_i\in A_i$ we add as a primitive the relation $\{a_i\}$. In this way we expand the language $L$ to $L(A)$ and so the $L$-structure $M$ is tautologically expanded to an $L(A)$-structure, which we also denote by $M$. In particular, we can now speak of $A$-definable sets of $M$, that is to say definable sets of $M$ defined with parameters coming from $A$. From now on, by a definable set in the structure $M$ we mean a definable set with parameters from $M$.

As usual in nonstandard methods, to ensure that our ambient structure enjoys of some model-theoretic compactness, we may take it to be {\em $\kappa$-saturated} for a suitable infinite cardinal $\kappa$.  We remind here that $\kappa$-saturation means that any intersection of less than $\kappa$ many definable subsets is non-empty, provided that all finite sub-intersection are. Let us remark that any infinite structure is elementarily equivalent to a $\kappa$-saturated structure for all $\kappa \ge |L|$, by compactness.
For many arguments, it is enough to take $\kappa=\aleph_1$ in which case one can take $M$ to be an ultraproduct of finite structures.

Assuming that the structure $M$ is $\kappa$-saturated, we can thus consider {\em type-definable sets} which are by definition the intersection of less than $\kappa$ many definable sets. In particular, a definable set is type-definable and in fact, an easy compactness argument yields that a set is definable if and only if its complement and itself are both type-definable. If $X$ is a type-definable set, we usually denote by $X(x)$ the {\em partial type} defining it, that is to say the collection $\{X_i(x)\}_{i\in I}$ of formulas $X_i(x)$ such that $X = \bigcap_i X_i$,  and {\it vice versa}. 
We remind that if the structure $M$ is very saturated, then a set is type-definable over a small set of parameters $A$ of $M$ if and only if it is type-definable and invariant under the group of automorphism $\mathrm{Aut}(M/A)$ of $M$ fixing the set $A$ pointwise. By a small set of parameters we mean a subset whose size is smaller than the degree of saturation of $M$.

To finish this introductory subsection, we recall the concept of relative connected component as well as the logic topology, since both play an essential role in the paper. For a detailed explanation we refer the reader to Section 2 of \cite{Pil0}. Fix a $\kappa$-saturated $L$-structure $M$, for a sufficiently large $\kappa$, and let $G$ be a definable group in $M$. A type-definable subgroup $H$ is said to have {\em bounded index} if the coset space $G/H$ has size less than $\kappa$. Equivalently, if $G$ and $H$ are defined over $M_0$, then $|G/H|<2^{|L|+|M_0|}$. For each small set $A$ of parameters there exists a smallest type-definable over $A$ subgroup of $G$ of bounded index. This group is in addition normal and we denote it by $G_A^{00}$. One can equip the coset space $G/H$ with a compact Hausdorff topology called the {\em logic topology}. A subset of $G/H$ is closed in the logic topology if and only if its pre-image under the natural projection is a type-definable set. 
Furthermore, when $H$ is a normal subgroup of $G$, then $G/H$ is a compact Hausdorff topological group. 

\subsection{Ultraproducts} Let $\U$ be a non-principal ultrafilter on $\N$, that is to say a collection of infinite subsets of $\N$ closed under intersections and with the property that either a set or its complement belongs to $\U$. We say that a property $Q(n)$ of $\N$ holds for $\U$-almost all $n$ if the set of natural numbers satisfying it belongs to $\U$. 
 
Let $(M_n)_{n\in\N}$ be an infinite sequence of $L$-structures in a countable language $L$. We write $\prod_\U M_n$ to denote the ultraproduct of $(M_n)_{n\in\N}$ with respect to $\U$, which is by definition the direct product $\prod_{n\in\N} M_n$ modulo the equivalence relation $\sim_\U$ given by 
$$
(a_i)_{i\in\N} \sim_\U (b_i)_{i\in\N} \ \Leftrightarrow \ a_i=b_i \text{ for $\U$-almost all $n$}.
$$ 
The ultraproduct of $L$-structures is again an $L$-structure which in addition is $\aleph_1$-saturated. Another fundamental feature of ultraproducts is the following transfer principle.

\begin{theorem}[\L os' Theorem]
Let $\phi(x_1,\ldots,x_m)$ be an $L$-formula and let $a_1,\ldots,a_m$ be elements of the ultraproduct $\prod_\U M_n$ of $L$-structures $(M_n)_{n\in \N}$ with representatives $(a_{1,n})_{n\in \N},\ldots, (a_{m,n})_{n\in\N}$. Then $\phi(a_1,\ldots,a_m)$ holds if and only if $\phi(a_{1,n},\ldots,a_{m,n})$ holds for $\U$-almost all $n$.
\end{theorem}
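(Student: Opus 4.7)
The plan is to prove \L os' Theorem by induction on the complexity (i.e., the number of logical connectives and quantifiers) of the formula $\phi$. Throughout I will write $\vec{a} = (a_1,\ldots,a_m)$ and $\vec{a}_n = (a_{1,n},\ldots,a_{m,n})$, and I will abbreviate ``for $\U$-almost all $n$'' by saying that the relevant set lies in $\U$.

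For the base case of atomic formulas, one first notes that the interpretation of a primitive $k$-ary relation $R$ in $\prod_\U M_n$ is defined precisely by declaring that $R((b_{1,n})_\U,\ldots,(b_{k,n})_\U)$ holds if and only if $\{n : R(b_{1,n},\ldots,b_{k,n})\text{ holds in }M_n\}\in\U$. This definition is independent of the chosen representatives because $\U$ is closed under supersets and finite intersections, and the case of equality is immediate from the definition of $\sim_\U$. Since the structures are relational (operations are coded by the graph relation), this handles every atomic formula.

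For the inductive step on Boolean combinations I would use the ultrafilter axioms in a minimal and symmetric way: conjunction uses closure under intersection, negation uses the dichotomy that $X\in\U$ or $\N\setminus X\in\U$ for every $X\subseteq\N$, and disjunction follows from these (equivalently, from the primality property that $X\cup Y\in\U$ implies $X\in\U$ or $Y\in\U$). Concretely, if the result holds for $\psi$, then $\neg\psi(\vec{a})$ holds in the ultraproduct iff $\psi(\vec{a})$ fails there iff $\{n : \psi(\vec{a}_n)\text{ holds}\}\notin\U$ iff its complement is in $\U$, which is exactly $\{n : \neg\psi(\vec{a}_n)\text{ holds}\}\in\U$.

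The main step, and the only one with genuine content beyond the ultrafilter bookkeeping, is the existential quantifier. For the forward direction, a witness $b = (b_n)_\U$ to $\exists x\,\psi(x,\vec{a})$ in the ultraproduct gives, by induction, a $\U$-large set of indices $n$ on which $\psi(b_n,\vec{a}_n)$ holds, so in particular $\exists x\,\psi(x,\vec{a}_n)$ holds on that set. For the converse, and here lies the only place where one must be slightly careful, suppose $I = \{n : M_n \models \exists x\,\psi(x,\vec{a}_n)\}\in\U$; invoking the axiom of choice, pick a witness $b_n\in M_n$ for each $n\in I$ and any element for $n\notin I$. The class $b = (b_n)_\U$ then satisfies, by induction, $\psi(b,\vec{a})$ in $\prod_\U M_n$, yielding $\exists x\,\psi(x,\vec{a})$. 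Universal quantification is reduced to this case via $\forall x\,\psi \equiv \neg\exists x\,\neg\psi$ using the already established case of negation. The hardest part is not technical difficulty but rather keeping the role of the ultrafilter versus the filter axioms transparent: filter properties alone suffice for positive Boolean combinations and the forward direction of $\exists$, whereas the ultrafilter property is invoked precisely in the negation step.
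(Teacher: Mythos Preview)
Your proof is the standard, correct induction on formula complexity and is fine. Note, however, that the paper does not supply a proof of \L os' Theorem at all: it is stated as a known transfer principle and used as a black box, so there is no ``paper's own proof'' to compare against. Your write-up is a perfectly good textbook argument; the only minor comment is that the atomic case should also mention that well-definedness of the relation on equivalence classes (independence of representatives) is part of what one verifies, which you do note in passing.
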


In this paper we are mainly interested in ultraproducts of finite groups, in which case $M_n$ is a $1$-sorted structure consisting of a sort for the domain of a finite group and the graph of the group operation as a primitive. In this case, the language is the pure language of groups $L_{\mathrm{gr}}=\{\cdot, ^{-1}\}$. As we shall see later, it is convenient to enrich the language and consider finite groups with some additional structure.

\begin{definition}
A subset $X$ of an ultraproduct $\prod_\U G_n$ of finite groups $(G_n)_{n\in \N}$ is said to be {\em internal} if there is a sequence $(X_n)_{n\in\N}$ such that each $X_n$ is a subset of $G_n$ and $X = \prod_\U X_n$. 
\end{definition}
We see that every definable set in the group language $L_{\mathrm{gr}}$ is internal, but the converse is not true. The collection of all internal subsets forms a Boolean algebra which is left and right translation-invariant under the group multiplication. 

Let $G$ be the ultraproduct $\prod_\U G_n$ of finite groups $(G_n)_{n\in\N}$. We define a finitely probability measure on the Boolean algebra 
of its internal sets, which is left and right translation-invariant under the group multiplication. To do so, consider the ultraproduct $\mathbb R^* = \prod_\U \mathbb R$ of the ordered field of real numbers, which is an ordered real closed field, and note that $\mathbb R$ embeds into $\mathbb R^*$. We call the elements of $\mathbb R^*$ nonstandard real numbers. We can assign to each internal subset $A=\prod_\U A_n$ the nonstandard real number $|A|$ corresponding to the equivalence class of $(|A_n|)_{n\in \N}$ in $\mathbb R^*$. A nonstandard real number $r\in \mathbb R^*$ with the property that it is bounded above or below by some real number is called finite. The standard part map $\mathrm{st}$ assigns to each finite nonstandard real number $a^*$ the unique real number $a$ such that $|a^*-a|<\frac{1}{k}$ for every natural number $k$. Finally, define the measure of $A$ by 
$$
\mu(A) = \mathrm{st}\left(\frac{|A|}{|G|}\right).
$$
This measure is easily seen to be stable under left and right multiplication, {\it i.e.} $\mu(gA)=\mu(A)=\mu(Ag)$ for any $g\in G$ and any internal subset $A$. 

An internal subset of $G$ is said to be a {\em null set} if it has measure zero, and non-null otherwise. We also say that a subset, not necessarily internal, is {\em wide} if all its internal supersets are non-null. The collection of internal null sets for an ideal, that is to say it is a proper collection of subsets which is closed under containment and finite unions. One of the main technical problems we encounter is that neither this ideal nor the measure are {\it a priori} invariant under automorphisms. To circumvent this issue one can expand the language, as we explain in the next subsection.

To finish this subsection, we give an easy but relevant fact on the counting measure.

\begin{lemma}\label{L:Intersection}
Let $G$ be an ultraproduct of finite groups and let $A$ and $B$ be two internal subsets of it. Then there exists some element $g$ of $AB^{-1}$ such that 
$$
\mu(A\cap gB)\mu(AB^{-1}) \ge \mu(A)\mu(B).
$$
\end{lemma}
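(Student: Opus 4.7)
The plan is to reduce to a finite-level double-counting and then transfer via {\L}o\'s. At the finite level, fix any finite group $G_0$ with subsets $A_0,B_0\subseteq G_0$. For each $g\in G_0$, one has $|A_0\cap gB_0|=|\{(a,b)\in A_0\times B_0:ab^{-1}=g\}|$, so partitioning pairs according to the value of $ab^{-1}$ yields the identity
$$
\sum_{g\in G_0} |A_0\cap gB_0| \;=\; |A_0|\,|B_0|.
$$
The summand is zero unless $g\in A_0B_0^{-1}$, so the sum is really over the at most $|A_0B_0^{-1}|$ elements of $A_0B_0^{-1}$. Averaging, some $g\in A_0B_0^{-1}$ satisfies
$$
|A_0\cap gB_0|\cdot|A_0B_0^{-1}| \;\ge\; |A_0|\,|B_0|.
$$

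To transfer this to the ultraproduct $G=\prod_\U G_n$, write $A=\prod_\U A_n$ and $B=\prod_\U B_n$; note that $AB^{-1}$ is internal and coincides with $\prod_\U A_nB_n^{-1}$. Apply the finite-level statement in each coordinate to pick $g_n\in A_nB_n^{-1}$ with $|A_n\cap g_nB_n|\cdot|A_nB_n^{-1}|\ge |A_n|\,|B_n|$, and set $g=[g_n]_\U$. Then $g\in AB^{-1}$, and by {\L}o\'s' theorem the internal inequality
$$
|A\cap gB|\cdot|AB^{-1}| \;\ge\; |A|\,|B|
$$
holds in $G$ (alternatively, one can argue directly inside the ultraproduct by averaging the internal function $g\mapsto |A\cap gB|$ over $AB^{-1}$, which is legitimate because this function takes nonstandard integer values and the internal sum $\sum_{g\in AB^{-1}}|A\cap gB|=|A||B|$ is obtained by the same double counting).

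Finally, divide the displayed inequality by $|G|^2$. Each of the four ratios $|A\cap gB|/|G|$, $|AB^{-1}|/|G|$, $|A|/|G|$, $|B|/|G|$ lies in $[0,1]$ and so is a finite nonstandard real. Since the standard part map is multiplicative on finite nonnegative nonstandard reals and preserves the order $\ge$, we obtain
$$
\mu(A\cap gB)\,\mu(AB^{-1}) \;\ge\; \mu(A)\,\mu(B),
$$
which is the desired conclusion.

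There is no genuine obstacle here: the argument is a one-line averaging bound combined with routine ultraproduct transfer. The only points that require a moment's attention are the identification $AB^{-1}=\prod_\U A_nB_n^{-1}$ as an internal set (so that $g\in AB^{-1}$ is a first-order condition available to {\L}o\'s) and the use of multiplicativity of $\mathrm{st}$ on finite nonnegative reals, both of which are standard.
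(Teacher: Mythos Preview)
Your proof is correct and follows essentially the same route as the paper: establish the finite-level counting identity $\sum_{x\in A_nB_n^{-1}} |A_n\cap xB_n| = |A_n|\,|B_n|$, extract a witness $g_n\in A_nB_n^{-1}$ satisfying the inequality, pass to the ultraproduct, divide by $|G|^2$ and take standard parts. The only cosmetic difference is that the paper chooses $u_n$ to maximise $|A_n\cap u_nB_n|$ whereas you invoke pigeonhole/averaging; both yield the same inequality and neither adds anything the other lacks.
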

\begin{proof}
Let $G=\prod_\U G_n$ be an ultraproduct of finite groups and suppose that $A=\prod_\U A_n$ and $B=\prod_\U B_n$. Observe first that for every $n$, there is always an element $u_n\in A_nB_n^{-1}$ such that $A_n\cap u_nB_n$ has maximal size among all subsets of this form. Furthermore, an easy counting argument yields
$$
|A_n||B_n| = \sum_{x\in A_nB_n^{-1}} \left|\{(a,b)\in A_n\times B_n : a=xb\} \right| = \sum_{x\in A_nB_n^{-1}} |A_n\cap xB_n|
$$
and so $|A_n||B_n| \le |A_nB_n^{-1}||A_n\cap u_nB_n|$. Setting $u$ to be the ultraproduct of $(u_n)_{n\in \N}$, we see that $|A||B| \le |AB^{-1}||A\cap uB|$. Dividing out by $|G|^2$ and taking the standard part we obtain the desired equation.
\end{proof}

\subsection{Nonstandard finite groups}  As pointed out before, it is convenient to consider richer languages in a way that $(i)$ internal sets become definable and $(ii)$ the measure described above is invariant. To do so, we ``canonically'' expand the structure of finite groups.

Let $G$ be a finite group. Consider the structure consisting of one sort for the finite group $G$ with the graph of its group operation as a primitive, another sort for the ordered field of the real numbers $\mathbb R$ with primitives for the graph of the order and the algebraic operations. Furthermore, we also add a sort for the power-set $\mathcal P(G^m)$ of each Cartesian power of the group $G$, with a primitive $\in_{m}$ on $G^m\times \mathcal P(G^m)$ defining the membership relation: a pair $(x,u)$ of $G^m \times \mathcal P(G^m)$ is $\in_m$-related if and only if $x\in u$. Finally, for each $m$ we add as a primitive the graph of the cardinality function from $\mathcal P(G^m)$ to $\mathbb R$. We denote by $L^*$ the language consisting of sorts and symbols for these primitives.

Seeing a finite group $G$ as a ``finite $L^*$-structure'', note that any subset of the Cartesian power $G^m$ of $G$ is definable by the same $L^*$-formula, varying the parameters. Namely, a subset $A$ of $G^m$ is defined by the formula $x\in_m A$, where we identify the set $A$ with its corresponding name in the sort $\mathcal P(G^m)$. 

We remark that any ultraproduct of finite groups can also be expanded to an $L^*$-structure  in a way that the resulting $L^*$-structure is an ultraproduct of finite $L^*$-structures. Furthermore, the paragraph  above yields that a subset of any ultraproduct $G$ of finite groups is internal if and only if it is $L^*$-definable. In particular, this yields that the logic compactness theorem as well as \L os' Theorem hold for internal sets, see \cite[Appendix A]{BGT} for another approach.

Along the paper, it may be convenient to consider saturated elementary extensions of an ultraproduct of finite $L^*$-structures. This motivates the following terminology\footnote{This differs from the one of Pillay \cite[Definition 2.1]{Pil1}. In fact, any saturated nonstandard finite group in our terminology will have the structure of a nonstandard finite group in the sense of Pillay, and {\it vice versa}.}.

\begin{definition}
By {\em nonstandard finite group} we mean an infinite group $G$ which as an $L^*$-structure is elementarily equivalent to an ultraproduct of finite $L^*$-structures. 
\end{definition}

Before finishing the subsection, let us remark that working in the $L^*$-language there is no harm in passing from one model to another, since elementary properties are precisely inherited from the finite structures. More precisely, notice that a definable set $A$ in a nonstandard finite group corresponds to the solution set of the $L^*$-formula $x\in_m A$, for a suitable $m$. Moreover, the graph of the cardinality function is $L^*$-definable and hence the value of the counting measure of a set only depends on the formula defining it, not on the model we are working on. This allows us to work without loss of generality in a sufficiently saturated nonstandard finite group. In this situation, the ideal of definable null sets, which is an $\mathrm{S}1$-ideal in the sense of \cite[Definition 2.8]{Hrus1}, is $\emptyset$-invariant under automorphisms of the ambient structure and consequently, the work of Hrushovski around stabilizers applies. 

\section{Stabilizers and product-free sets} \label{S:PF}

Recall that we denote the set of realizations of a collection of formulas $X(x)$ by $X$, and {\it vice versa}. In particular, we write $p$ for the set of realizations of a complete type $p(x)$. Hence, we say that a partial type is wide if its set of realizations is. Note that a compactness argument yields that any wide partial type can be extended to a complete wide type over any set of parameters, since the collection of definable sets of measure zero form an ideal. As usual, we denote the space of types that concentrate on a definable set $X$ by $S_X(M)$. 

\subsection{Hrushovski's stabilizer} Let $G$ be a nonstandard finite group definable in a very saturated structure $\bar M$. By definable we mean in the sense of the expanded language $L^*$.

 Given a complete type $p(x)\in S_G(M)$ over a small model $M$ we define the set
$$
\st(p)=\{g\in G: gp\cap p \text{ is wide}\}.
$$ 
We see that this set contains the identity element of $G$ and it is symmetric, in the sense that it is closed under taking inverses. The stabilizer of $p$ is the group generated by the set $\st(p)$. We denote it by $\stab(p)$. 

Now, in view of Theorem 3.5 and Corollary 3.6 of \cite{Hrus1} we obtain the following result. In fact, in our framework it is more convenient to apply Theorem 2.12 (taking $X=G$) together with Proposition 2.13 and 2.14 of \cite{MOS}, which is a variant of the aforementioned results of Hrushovski. In any case, we may conclude:

\begin{theorem}\label{T:Udi}
Let $G$ be a nonstandard finite group and let $p(x)\in S_G(M)$ be a wide type over a small model $M$ of the ambient model. Then
$$
\stab(p)=(pp^{-1})^2 = \st(p)^2
$$
is a normal subgroup of bounded index and in addition the following holds:
\begin{enumerate}[$(i)$]
\item the set $pp^{-1}p$ is the coset $\stab(p)a$ of $\stab(p)$ with $a\in p$, and
\item the set $\stab(p)\setminus \st(p)$ is contained in a union of definable sets of measure zero with parameters over $M$.
\end{enumerate} 
In fact, the stabilizer $\stab(p)$ is $G_M^{00}$, the smallest type-definable subgroup of $G$ of bounded index defined over $M$.
\end{theorem}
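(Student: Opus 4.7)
The statement follows the classical pattern of Hrushovski's stabilizer theorem, and the plan is to port the standard argument to the measure-theoretic framework of nonstandard finite groups, where the ideal of internal null sets is $S1$ (reflecting translation-invariance of the counting measure). This $S1$ property is exactly what replaces forking independence in the stable setting, and it grants that wide partial types extend to complete wide types over any small set of parameters.

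I would begin with the identification $\st(p)^2 = (pp^{-1})^2$. The easy direction is $\st(p) \subseteq pp^{-1}$: if $g \in \st(p)$ then $p(x) \wedge p(g^{-1}x)$ is wide, so it has a realization $a$, whence $a, ga \in p$ and $g = (ga)a^{-1} \in pp^{-1}$. The converse inclusion $pp^{-1} \subseteq \st(p)^2$ is the main use of $S1$. Given $a, b \models p$, take $c$ realizing a wide extension of $p$ to a complete type over $Mab$. The $S1$ axiom implies $ca^{-1} \in \st(p)$: otherwise a formula of $p$ together with an appropriate translate would witness an $Ma$-definable null set containing $c$, contradicting the wideness of $\tp(c/Ma)$. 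Symmetrically $cb^{-1} \in \st(p)$, so $ab^{-1} = (ca^{-1})^{-1}(cb^{-1}) \in \st(p)^2$. Squaring gives $\st(p)^2 \subseteq (pp^{-1})^2 \subseteq \st(p)^4$.

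To complete the group calculation I would argue inductively, using $S1$ again, that for any $g_1, \ldots, g_k \in \st(p)$ the partial type asking for a tuple $(a_0, \ldots, a_k)$ with $a_i = g_i a_{i-1}$ and each $a_i \models p$ is consistent; here wideness of each pairwise intersection $p \cap g_i^{-1}p$ propagates to wideness of the full intersection by the $S1$ property of the null ideal. Then $g_k \cdots g_1 = a_k a_0^{-1} \in pp^{-1} \subseteq \st(p)^2$, which together with symmetry gives $\st(p)^2 = \st(p)^k$ for every $k \ge 2$, hence $\stab(p) = \st(p)^2 = (pp^{-1})^2$ and this is a group. Type-definability over $M$ is visible from the description; bounded index follows because distinct cosets of $\stab(p)$ support distinct wide $M$-types, of which there are only boundedly many in a saturated model; a minimality argument then identifies $\stab(p)$ with $G_M^{00}$, in particular giving normality.

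Part $(i)$ follows by observing that every $b \models p$ satisfies $ba^{-1} \in pp^{-1} \subseteq \stab(p)$, so $p \subseteq \stab(p) \cdot a$ and hence $pp^{-1}p \subseteq \stab(p) \cdot a$; for the reverse inclusion one decomposes $ga \in \stab(p) \cdot a$ as $(gd)d^{-1}a$ for a wide $d \models p$ chosen so that $gd \models p$, which is available because $\stab(p)$ preserves the wide-type content of $p$. Part $(ii)$ is the true obstacle: for each formula $\phi(x) \in p$ set $N_\phi = \{h : \mu(\phi(x) \wedge \phi(h^{-1}x)) = 0\}$, an $M$-definable set. A Fubini argument against the translation-invariant counting measure shows $N_\phi$ is null, since wideness of $N_\phi$ would contradict wideness of $\phi$ by double-counting. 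Every $g \in \stab(p) \setminus \st(p)$ lies in $N_\phi$ for some $\phi \in p$, so the union provides the required cover by $M$-definable null sets. This measure-theoretic step is where the variant of Hrushovski's stabilizer theorem in \cite{MOS} does its nontrivial work.
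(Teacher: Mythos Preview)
The paper does not give its own proof of this theorem: it is stated as a direct consequence of Hrushovski's stabilizer theorem \cite[Theorem 3.5, Corollary 3.6]{Hrus1} and of the variant in \cite[Theorem 2.12, Propositions 2.13--2.14]{MOS}. Your proposal instead tries to reproduce the content of those cited results. The overall architecture is the right one, and the reductions $\st(p)\subseteq pp^{-1}$, $p\subseteq\stab(p)a$, and the identification with $G_M^{00}$ are handled correctly. Two steps, however, are not justified by what you wrote.

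First, the crux step ``$ca^{-1}\in\st(p)$ because otherwise a formula of $p$ together with a translate gives an $Ma$-definable null set containing $c$'' does not go through as stated. The witnessing null formula $\phi(x)\wedge\phi(ac^{-1}x)$ is over $Mac$, not $Ma$; passing to the $Ma$-definable set $\{y:\phi(y)\wedge\mu_x(\phi(x)\wedge\phi(ay^{-1}x))=0\}$ is the natural move, but you have not shown this set is null, and it is exactly here that the genuine $S1$ machinery (indiscernible sequences, or the symmetry lemma in \cite{Hrus1,MOS}) is needed. One line of hand-waving does not replace it.

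Second, your argument for $(ii)$ contains a concrete error. You claim that a Fubini/double-counting argument shows $N_\phi=\{h:\mu(\phi\cap h\phi)=0\}$ is null. This is false: in an ultraproduct of cyclic groups with $\phi$ an interval of density $c<1/2$, one has $\mu(N_\phi)=1-2c>0$. The Fubini identity $\int_h\mu(\phi\cap h\phi)\,dh=\mu(\phi)^2$ only prevents $N_\phi$ from having full measure. What actually makes $(ii)$ work is that one restricts to $h\in\stab(p)=G_M^{00}$: once the main theorem is in hand, any wide $M$-type concentrating on $\stab(p)$ can be shown to lie inside $\st(p)$, whence $\stab(p)\setminus\st(p)$ avoids every wide $M$-type and is therefore covered by $M$-definable null sets. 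This is the content of the cited propositions in \cite{MOS}, and your Fubini shortcut does not substitute for it.
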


Using this we get a weaker version of \cite[Proposition 2.2]{PiScWa} in the nonstandard context, see also \cite[Theorem 4.7]{MaTe}. 

\begin{proposition}\label{P:Amalg}
Let $G$ be a nonstandard finite group and assume that it has a smallest type-definable bounded index subgroup $H$. If  $H$ is type-defined with parameters over a small model $M$ and $p_1(x),p_2(x),p_3(x)\in S_G(M)$ are wide types concentrated in $H$, then there are elements $a_i\in p_i$ such that $a_1a_2=a_3$. Furthermore, we then have $H=p_1p_2p_3$.
\end{proposition}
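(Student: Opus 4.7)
My plan is to apply Theorem~\ref{T:Udi} to each $p_i$ to conclude that they all share stabilizer $H$, and then to extract the desired factorization by an amalgamation-type argument based on the co-nullity of $\st(p_i)$ in $H$.

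First, since each $p_i$ is wide and concentrated in $H=G_M^{00}$, Theorem~\ref{T:Udi} yields $\stab(p_i)=H$, and moreover each $\st(p_i)\subseteq p_ip_i^{-1}$ is a co-null subset of $H$, meaning $H\setminus \st(p_i)$ is covered by $M$-definable sets of counting measure zero. To find the triple, I would fix any $a_3\in p_3$ and argue that the partial type $\Sigma(x):=p_2(x)\cup\{a_3x^{-1}\in p_1\}$ over $Ma_3$ is consistent; any realization $a_2$ of $\Sigma$ then yields $a_1:=a_3a_2^{-1}\in p_1$ and hence $a_1a_2=a_3$. By saturation, consistency reduces to showing that for every $M$-definable $A\supseteq p_1$ and $B\supseteq p_2$, the intersection $B\cap A^{-1}a_3$ is nonempty; this I would witness by combining the co-nullity of $AA^{-1}$ and $BB^{-1}$ in $H$ (each contains $\st(p_i)$) with Lemma~\ref{L:Intersection}, working inside a suitable bounded-index definable supergroup of $H$. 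The main obstacle, and technical heart of the argument, is the step that upgrades from such definable approximations to the actual complete types: the witness produced for each pair $(A,B)$ must be coordinated so that, by compactness, one obtains a realization of the full type $p_2$. I would handle this by passing to wide completions of $p_1$ and $p_2$ over $Ma_3$, whose stabilizers remain $H$ by the minimality hypothesis, and iterating the argument there.

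Finally, for the furthermore claim $H=p_1p_2p_3$, the inclusion $p_1p_2p_3\subseteq H$ is immediate from $p_i\subseteq H$. Conversely, given $h\in H$, I would apply the first part of the proposition over the extended parameter set $M\cup\{h\}$ to wide completions of $p_1, p_2$ over $Mh$ and to a wide completion of the translated partial type $hp_3^{-1}$; the latter is wide with stabilizer $H$ because $H$ is normal and $h\in H$. The resulting factorization $a_1a_2=ha_3^{-1}$ with $a_3\in p_3$ then yields $h=a_1a_2a_3\in p_1p_2p_3$, as required.
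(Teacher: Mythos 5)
Your argument for $H=p_1p_2p_3$ follows the paper's route exactly, but the first part has a genuine gap, and the remedy you sketch does not close it. You fix $a_3\in p_3$ in advance and reduce to showing $B\cap A^{-1}a_3\neq\emptyset$, equivalently $a_3\in AB$, for every $M$-definable $A\supseteq p_1$, $B\supseteq p_2$. First a small point: the worry you raise about ``coordinating'' witnesses for different pairs $(A,B)$ is a red herring --- finite satisfiability of $\Sigma$ is all that compactness needs, and each pair may supply its own witness. The real obstacle is that $a_3\in AB$ simply does not follow from the ingredients you list. The co-nullity of $\st(p_i)$ in $H$ says that any element of $H$ which avoids every $M$-definable null set lies in $\st(p_i)\subseteq p_ip_i^{-1}$; applied to $a_3$ (which realizes the wide type $p_3$) this yields $a_3\in p_1p_1^{-1}$ and $a_3\in p_2p_2^{-1}$, but there is no mechanism to pass from these to $a_3\in p_1p_2\subseteq AB$, because Theorem~\ref{T:Udi} only controls sets of the form $pp^{-1}$ for a \emph{single} type $p$, not a product of two different types. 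Lemma~\ref{L:Intersection} produces some element $g$ for which $A\cap gB$ is non-null, not the pre-chosen $a_3$. Worse, your ``iterating'' fix --- taking wide completions over $Ma_3$ --- destroys the very property you need: over $Ma_3$ the element $a_3$ realizes an algebraic and hence null type, so the argument that $a_3$ avoids all definable null sets over the base is no longer available. Finally, the ``bounded-index definable supergroup of $H$'' you appeal to need not exist: in a nonstandard finite group $G^{00}_M$ is in general not an intersection of definable finite-index subgroups.

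The paper succeeds precisely because it never fixes $a_3$. Using Lemma~\ref{L:Intersection} together with compactness it first produces a single element $g$ for which the partial type $p_3\cap gp_2$ is wide; this is the essential ``pre-linking'' step. Taking a wide completion $q$ of $p_3\cap gp_2$ over a small model $N\ni g$, one has $q\subseteq p_3$ and $q\subseteq gp_2$, so $qq^{-1}\subseteq p_3p_2^{-1}g^{-1}$; and a wide completion $r$ of $p_1g^{-1}$ over $N$ lies in $H$ and avoids all $N$-definable null sets, hence $r\subseteq\st(q)\subseteq qq^{-1}$ by Theorem~\ref{T:Udi}. This gives $p_1g^{-1}\cap p_3p_2^{-1}g^{-1}\neq\emptyset$ and therefore $p_1p_2\cap p_3\neq\emptyset$. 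Building the wide intersection $p_3\cap gp_2$ before applying the stabilizer theorem is exactly the device missing from your sketch.
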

\begin{proof}
Note first that $H=G_{M'}^{00}$ for any small model $M'$ containing the parameters of $H$; in particular, this holds for $M$. Using Lemma \ref{L:Intersection}, by compactness we can find an element $g$ of $G$ such that $p_3\cap g p_2$ is wide. Let $N$ be an elementary extension of $M$ containing this element $g$ and let $q(x)\in S_G(N)$ be wide extensions of $(p_3\cap g p_2)(x)$. Note that $q$ is a subset of $H$, as so is $p_3$. On the other hand, the right translate $p_1g^{-1}$ of $p_1$ is wide, since the ideal of measure zero sets is preserved under right translation. Let $r(x) \in S_G(N)$ be some wide extension of the wide type $(p_1 g^{-1})(x)$ and note that the set $r$ is contained in $H$ as well, since $g\in p_3p_2^{-1}\subseteq H$. As $H=G_N^{00}$, applying Theorem \ref{T:Udi} we see that $qq^{-1}$ contains $r$ and so $p_3p_2^{-1}g^{-1} \cap p_1g^{-1} \neq \emptyset$. Consequently, we also get that $p_1p_2\cap p_3\neq \emptyset$.

For the second part, let $h\in H$ be an arbitrary element and consider the type-definable wide set $hp_3^{-1}$. Let now $M'$ denote a small model containing $M$ and $h$, and let $p_3'(x)\in S_G(M')$ be a wide type extending $(hp_3^{-1})(x)$. Take some arbitrary wide types $p_1'(x),p_2'(x)\in S_G(M')$ extending $p_1(x)$ and $p_2(x)$ respectively. Applying the first part of the statement, we find some elements $b_i\in p_i'$ such that $b_1b_2=b_3$. By construction, we get some $a\in p_3$ such that $b_3=ha^{-1}$ and so $h=b_1b_2a$, yielding that $H = p_1p_2p_3$. 
\end{proof}

The following fundamental observation relates product-free sets with the stabilizer subgroup, yielding that product-free types are precisely those types which are entirely contained in a proper coset of its stabilizer.
 
\begin{lemma}\label{L:StabPF}
Let $G$ be a nonstandard finite group and let $p(x)\in S_G(M)$ be a wide type over a small model of the ambient model. Then the following are equivalent:
\begin{enumerate}[$(a)$]
\item The set $p$ is product-free.
\item The set $pp^{-1}p$ is a proper coset of $\stab(p)$.
\item The set $(pp^{-1})^m p$ is product-free for each $m\ge 0$. 
\item The set $p$ is contained in a proper coset of $\stab(p)$.
\end{enumerate} 
In particular, we have that $\stab(p)$ is a proper subgroup if and only if $p$ is product-free.
\end{lemma}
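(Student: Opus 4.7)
The plan is to cycle $(a)\Rightarrow(b)\Rightarrow(c)\Rightarrow(a)$ and derive $(b)\Leftrightarrow(d)$, using Theorem~\ref{T:Udi} as the main tool. First, from Theorem~\ref{T:Udi}$(i)$ we have $pp^{-1}p=\stab(p)a$ for any $a\in p$, and since $a=aa^{-1}a\in pp^{-1}p$ the whole set $p$ lies inside this single coset. This immediately gives $(b)\Leftrightarrow(d)$: the coset $\stab(p)a$ is proper if and only if $a\notin\stab(p)$, if and only if $p$ is contained in a proper coset.

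For $(a)\Rightarrow(b)$, I would assume $p$ is product-free but $a\in\stab(p)$, so $p\subseteq\stab(p)$. Since $\stab(p)=G_M^{00}$ is the smallest type-definable bounded-index subgroup over $M$, Proposition~\ref{P:Amalg} applies to $p_1=p_2=p_3=p$, three wide types concentrated in $\stab(p)$, and produces elements $a_1,a_2,a_3\in p$ with $a_1a_2=a_3$, contradicting product-freeness. For $(b)\Rightarrow(c)$, the inclusion $p\subseteq\stab(p)a$ together with normality of $\stab(p)$ yields $pp^{-1}\subseteq\stab(p)$, and hence $(pp^{-1})^mp\subseteq\stab(p)a$ for every $m\ge 0$. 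The coset $\stab(p)a$ with $a\notin\stab(p)$ is itself product-free, since an equation $xy=z$ inside it would give $\stab(p)a^2=\stab(p)a$ and so $a\in\stab(p)$. Finally $(c)\Rightarrow(a)$ is the case $m=0$.

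The concluding ``in particular'' then falls out of $(a)\Leftrightarrow(d)$: if $p$ is product-free then $p$ sits in a proper coset and hence $\stab(p)\ne G$, while if $\stab(p)=G$ no proper coset exists and $(d)$ fails. The main obstacle is the step $(a)\Rightarrow(b)$, which crucially uses the identification $\stab(p)=G_M^{00}$ so that Proposition~\ref{P:Amalg} can be invoked with three copies of $p$, regarded as concentrated in this same subgroup; everything else is bookkeeping with cosets of the normal subgroup $\stab(p)$.
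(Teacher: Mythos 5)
Your implications $(b)\Leftrightarrow(d)$, $(b)\Rightarrow(c)$, and $(c)\Rightarrow(a)$ are correct and essentially track the paper's reasoning; the coset identity $pp^{-1}p=\stab(p)a$ from Theorem~\ref{T:Udi}$(i)$ together with normality does all the work, exactly as you use it. The problem is in $(a)\Rightarrow(b)$. Proposition~\ref{P:Amalg} is stated under the hypothesis that $G$ has a smallest type-definable bounded-index subgroup $H$, meaning smallest over \emph{all} small parameter sets; its proof uses this precisely at the step where it asserts $H=G_N^{00}$ for an elementary extension $N$ of $M$. What Theorem~\ref{T:Udi} gives you is only that $\stab(p)=G_M^{00}$, the smallest type-definable bounded-index subgroup \emph{over $M$}. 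Nothing in the hypotheses of Lemma~\ref{L:StabPF} guarantees that $G_M^{00}=G_{M'}^{00}$ for larger small models $M'$, so the hypothesis of Proposition~\ref{P:Amalg} has not been verified and the appeal to it does not go through as written.

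The paper avoids this by a short direct argument within the resources of Theorem~\ref{T:Udi}. Supposing $pp^{-1}p=\stab(p)$, one gets $p\subseteq\stab(p)$. By Theorem~\ref{T:Udi}$(ii)$, $\stab(p)\setminus\st(p)$ is contained in a union of $M$-definable null sets; since $p$ is a complete wide type over $M$ and $\st(p)$ is $\Aut(\bar M/M)$-invariant, it follows that $p\subseteq\st(p)$. But $\st(p)\subseteq pp^{-1}$ by definition of $\st$, so $p\subseteq pp^{-1}$, hence $p^2\cap p\neq\emptyset$, contradicting $(a)$. To salvage your route you would need to first establish that $G_M^{00}$ is independent of the small model $M$ for nonstandard finite groups, which is a separate fact the lemma does not supply; replacing the use of Proposition~\ref{P:Amalg} by the direct computation above is the cleaner fix.
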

\begin{proof}
Assume that $p$ is product-free. By Theorem \ref{T:Udi} we know that $pp^{-1}p$ is a coset of $\stab(p)$. Therefore, to obtain $(b)$ it is enough to show that $pp^{-1}p\neq \stab(p)$. Suppose to get a contradiction that $pp^{-1}p=\stab(p)$. As the identity element belongs to $pp^{-1}$, we then have that $p$ is contained in $\stab(p)$. Furthermore, since the set $\stab(p)\setminus \st(p)$ is contained in a union of $M$-definable sets of measure zero by Theorem \ref{T:Udi}, we see that $p$ is a subset of $\st(p)$. Thus we get that $p$ is contained in $pp^{-1}$ and so $p^2\cap p \neq\emptyset$, a contradiction.

To see $(b)\Rightarrow(c)$, suppose that $pp^{-1}p$ is a proper right coset, say $\stab(p)u$, of $\stab(p)$. It then follows that $p$ is contained in $\stab(p)u$ and so
$$
(pp^{-1})^m p \subseteq \left(\stab(p)u \left( \stab(p)u \right)^{-1} \right)^m \stab(p)u = \stab(p)u.
$$ 
Thus $(pp^{-1})^m p$ is contained in the coset $\stab(p)u$ and so it is product-free. Finally, taking $m=0$ we get $(c)\Rightarrow (d)$ and obviously $(d)$ implies $(a)$, which finishes the proof.
\end{proof}

 We can now deduce (a strengthening) of Theorem 1.1.
\begin{cor}
For any $c,\epsilon>0$ and any $m\ge 1$, there are some $k=k(c,\epsilon,m)$ and $\eta=\eta(c,\epsilon,m)$ such that the following holds. Suppose that $G$ is a finite group of order $n$  with a product-free set $A$ of size at least $cn$. Then there are symmetric subsets 
$$
X_{m}\subseteq X_{m-1} \subseteq \ldots \subseteq X_1 \subseteq (AA^{-1})^2
$$
containing the identity with the property that $X_{i+1}^2 \subseteq X_i$ and such that the group $G$ is covered by at most $k$ many translates of $X_m$. Furthermore, we have that $|X_1\setminus AA^{-1}|<\epsilon n$ and that there is some $x\in A$ such that $X_1x$ is contained in $AA^{-1}A$ and $|X_mx\cap A|\ge \eta n$.
\end{cor}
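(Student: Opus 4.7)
The plan is to argue by compactness, building on the stabilizer-based proof of Theorem 1.1. If the Corollary fails for some fixed $c,\epsilon>0$ and $m\ge 1$, then for each $n$ one finds a finite group $G_n$ of order $N_n$ with a product-free $A_n\subseteq G_n$ of density $\ge c$ admitting no valid configuration $(X_1,\ldots,X_m,x)$ with $k\le n$ and $|X_mx\cap A_n|\ge N_n/n$. Passing to the ultraproduct $G=\prod_\U G_n$ with $A=\prod_\U A_n$, \L os' theorem reduces matters to exhibiting in $G$ internal symmetric sets $X_1\supseteq\cdots\supseteq X_m$ containing the identity, with $X_{i+1}^2\subseteq X_i$, $X_1\subseteq (AA^{-1})^2$ and $\mu(X_1\setminus AA^{-1})<\epsilon$; some $x\in A$ with $X_1x\subseteq AA^{-1}A$; and specific finite $k$ and $\eta>0$ witnessing that $G$ is covered by $k$ translates of $X_m$ and $\mu(X_mx\cap A)\ge\eta$. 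This configuration then transfers back to $\U$-almost every $G_n$, contradicting the failure.

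In $G$, $A$ is wide and product-free. Fix a small countable model $M$ over which $A$ is defined and extend the formula ``$x\in A$'' to a complete wide type $p(x)\in S_G(M)$. By Lemma \ref{L:StabPF} and Theorem \ref{T:Udi}, the stabilizer $H:=\stab(p)=(pp^{-1})^2$ is a proper, normal, type-definable, bounded-index subgroup of $G$ contained in $(AA^{-1})^2$. Choose a realization $a$ of $p$ in an elementary extension: then $p\subseteq Ha=pp^{-1}p\subseteq AA^{-1}A$, so $H\subseteq AA^{-1}Aa^{-1}$. Moreover $\st(p)\subseteq pp^{-1}\subseteq AA^{-1}$, so by Theorem \ref{T:Udi}(ii) the set $H\setminus AA^{-1}\subseteq H\setminus\st(p)$ is contained in a countable union of $M$-definable null sets. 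Writing $H=\bigcap_i Y_i$ as a decreasing intersection of $M$-definable symmetric supersets and invoking $\sigma$-additivity of the Loeb measure gives $\mu(Y_i\setminus AA^{-1})\to 0$; some $Y_i=:V$ is then a definable symmetric superset of $H$ inside $(AA^{-1})^2$ with $\mu(V\setminus AA^{-1})<\epsilon$.

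Now iterate saturation (using $H\cdot H=H$) to produce $Ma$-definable symmetric sets $X_1\supseteq\cdots\supseteq X_m$, each containing $H$, with $X_{i+1}^2\subseteq X_i$ and $X_1\subseteq V\cap AA^{-1}Aa^{-1}\cap aA^{-1}AA^{-1}$ (this intersection being symmetric). Since $H=G_{Ma}^{00}$ has bounded index and $X_m$ is a definable superset of $H$, the standard genericity argument in the compact Hausdorff quotient $G/G_{Ma}^{00}$ (with the logic topology) delivers some specific finite $k$ such that $G$ is covered by $k$ translates of $X_m$. Take $x:=a$: the containment $X_1x\subseteq AA^{-1}A$ holds by construction, and $X_mx\cap A\supseteq Ha\cap A\supseteq p$ is a definable superset of the wide type $p$, hence of strictly positive measure $\eta:=\mu(X_mx\cap A)>0$. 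The main technical point I anticipate is the Loeb $\sigma$-additivity step in the previous paragraph, which is what allows one to convert Theorem \ref{T:Udi}(ii)'s covering by countably many null definable sets into a single small-measure definable envelope $V\supseteq H$; the counting measure alone is only finitely additive, so this passage to the Loeb completion is essential.
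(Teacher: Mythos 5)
Your proof is correct and follows the same route as the paper's: contradiction, pass to an ultraproduct, extract a wide product-free type, apply Theorem~\ref{T:Udi} to get $\stab(p)$ as a bounded-index, $M$-type-definable proper subgroup inside $(A^*(A^*)^{-1})^2$, use compactness/saturation to carve out the definable approximants $X_1\supseteq\cdots\supseteq X_m$, and transfer back via \L os. The only genuine difference is in how you obtain the small-measure envelope $V\supseteq H$ with $\mu(V\setminus AA^{-1})<\epsilon$: you invoke $\sigma$-additivity of the Loeb measure along the decreasing chain $Y_i\setminus AA^{-1}\searrow H\setminus AA^{-1}$, whereas the paper observes that $\stab(p)\setminus A^*(A^*)^{-1}$ is a type-definable set contained in countably many $M$-definable null sets, hence (by $\aleph_1$-saturation and compactness applied to $\stab(p)\cap (A^*(A^*)^{-1})^c\cap\bigcap_n Z_n^c=\emptyset$) is contained in a single finite union of null sets, so is not wide; this gives some $X_j$ with $\mu(X_j\setminus A^*(A^*)^{-1})=0$ directly. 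Both mechanisms are valid; the paper's compactness version is a bit more elementary since it avoids the Carath\'eodory/Loeb machinery, but yours buys the cleaner monotone-convergence phrasing you anticipated being the crux. One point worth being slightly more explicit about in your write-up: the final transfer needs the existence of the configuration $(X_1,\ldots,X_m,x)$ with specific rational bounds on $k$, $\epsilon$ and $\eta$ to be an $L^*$-sentence (quantifying over the power-set sorts), so that elementarity from the saturated model down to $M$ and then \L os' theorem can be applied; you gesture at this but the paper records the reindexing and the passage to $a'\in A$ a little more carefully.
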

\begin{proof}
Otherwise, negating quantifiers there are constants $c,\epsilon>0$ and a positive integer $m$ such that for each $n$ we can find a finite group $G_n$ with a product-free subset $A_n$ of size at least $c|G_n|$ witnessing a counterexample. Fix a non-principal ultrafilter $\U$ on $\N$ and let $G = \prod_\U G_n$ be  the ultraproduct of $(G_n)_{n\in\N}$ with respect to $\U$, seen as a nonstandard finite group in a structure $M$. Let $A=\prod_\U A_n$, an internal product-free subset of $G$, and note that $|A|\ge c|G|$. 

Consider some very saturated elementary extension $M^*$ of $M$. Let $G^*$ and $A^*$ be the interpretation in $M^*$ of the formulas $G(x)$ and $A(x)$, and notice that $A^*$ has the same measure as $A$ and it is product-free. Thus, we can find a wide type $p(x)\in S_G(M)$ containing $A(x)$, which is necessarily product-free. Consequently, we have by Theorem \ref{T:Udi} and Lemma \ref{L:StabPF} that $\stab(p)$ is a proper subgroup of bounded index. Since $\stab(p)=(pp^{-1})^2$, it is an $M$-type-definable subgroup of $G^*$ and it is contained in $(A^*(A^*)^{-1})^2$. Thus, a standard compactness argument yields the existence of infinitely many formulas $X_i(x)$ with parameters over $M$ such that
$$
\stab(p)=\bigcap_i X_i
$$
with the following properties: each set $X_i$ is symmetric, contains the identity, it is contained in $(A^*(A^*)^{-1})^2$, we have that $X_{i+1}^2 \subseteq X_i$ and  the group $G^*$ is covered by $k_i$ translates of $X_i$. Furthermore, since $\stab(p)$ is a proper subgroup, we can take $X_1$ properly contained as well. 

In addition, we also have by Theorem \ref{T:Udi} that $pp^{-1}p$ is a coset of $\stab(p)$, say $\stab(p)a$ with $a\in p$, and that $\stab(p)\setminus pp^{-1}$ is not wide. The latter implies that $\stab(p)\setminus A^*(A^*)^{-1}$ is not wide and so there must be some $X_j$ witnessing that $\mu(X_j\setminus A^*(A^*)^{-1})=0$.  Without loss, we may assume that $X_j=X_1$ and note that in particular, we have that $\mu(X_1\setminus A^*(A^*)^{-1})<\epsilon$. On the other hand, since the type-definable set $\stab(p)a$ is included in $A^*(A^*)^{-1}A^*$, again by compactness we find some $X_{k}$ such that $X_ka \subseteq A^*(A^*)^{-1}A^*$. After renaming the sets $X_i$, we can assume that $X_k=X_1$. Finally, observe that $X_ia\cap A^*$ has positive measure, say $\mu(X_ia\cap A^*)=\eta_i$, since both $X_ia$ and $A$ contain $p$. 

Let $X_i$ also denote the interpretation of $X_i(x)$ in $M$. 
Altogether, since the properties we are interested in are all expressible in first-order, we have found formulas $X_i(x)$ with the property that
$$
X_{m} \subseteq X_{m-1} \subseteq \ldots \subseteq X_1 \subseteq (AA^{-1})^2
$$
and such that $X_i=X_i^{-1}$, $1\in X_i$, $X_{i+1}^2\subseteq X_i$, each $X_i$ covers $G$ with $k_i$ many translates and moreover $\mu(X_1\setminus AA^{-1})<\epsilon$. Furthermore, we can find some $a'\in A$ such that $\mu(X_ma'\cap A)\ge \eta_m$ and that $X_ma'\subseteq AA^{-1}A$. Therefore, using \L os' Theorem (for internal sets) we see that the internal sets $X_i$ induce subsets $X_{n,i}$ of $G_n$ satisfying the same properties for $\U$-almost all $n$, contradicting our choice of the $G_n$ and $A_n$. This finishes the proof.
\end{proof}

\subsection{On the question of Babai and S\'os} 
At this point, we can easily answer the original question of Babai and S\'os, concerning the family of finite simple non-abelian groups of a given Lie type and rank, refuted by Gowers in \cite[Corollary 3.4]{Gow}. 

\begin{proposition}\label{P:Simple1}
Let $c>0$ and let $G$ be a finite simple non-abelian group of Lie type of Lie rank $r$. There exists an integer $n=n(r,c)$ such that if $G$ contains a product-free set of size at least $c|G|$, then $|G|\le n$.
\end{proposition}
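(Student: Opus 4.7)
The plan is to argue by contradiction via an ultraproduct construction. Suppose the conclusion fails: for some fixed $c>0$ and $r\ge 1$ there is a sequence $(G_n)_{n\in\N}$ of finite simple non-abelian groups of Lie type of rank $r$ with $|G_n|\to\infty$, each admitting a product-free subset $A_n$ with $|A_n|\ge c|G_n|$. Fix a non-principal ultrafilter $\U$ on $\N$ and form $G=\prod_\U G_n$, viewed as a nonstandard finite group; the internal subset $A=\prod_\U A_n$ is then product-free and has measure at least $c$.

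Applying the preceding Corollary with parameters $c$, some fixed $\epsilon>0$, and $m=3$ yields a constant $K=K(c,\epsilon,3)$ and, for $\U$-almost every $n$, symmetric subsets
\[
X_{n,3}\subseteq X_{n,2}\subseteq X_{n,1}\subsetneq G_n
\]
containing the identity, with $X_{n,i+1}^2\subseteq X_{n,i}$ for $i=1,2$, and with $G_n$ covered by $K$ left translates of $X_{n,3}$. From this I extract three useful facts: $(i)$ $|X_{n,3}|\ge |G_n|/K$; $(ii)$ since $X_{n,3}^2\subseteq X_{n,2}\subseteq G_n$ is covered by $K$ left translates of $X_{n,3}$, the set $X_{n,3}$ is a symmetric $K$-approximate subgroup; $(iii)$ using $X_{n,3}\subseteq X_{n,2}$ together with $X_{n,3}^2\subseteq X_{n,2}$ and $X_{n,2}^2\subseteq X_{n,1}$, we obtain $X_{n,3}^3\subseteq X_{n,2}^2\subseteq X_{n,1}$, properly contained in $G_n$.

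To derive the contradiction I first observe that since $G_n$ is covered by $K$ left translates of $X_{n,3}\ni 1$, the subgroup $\langle X_{n,3}\rangle$ has index at most $K$ in $G_n$. The classical fact that the minimum index of a proper subgroup of a finite simple group of Lie type of rank $r$ tends to infinity with $|G_n|$ forces $\langle X_{n,3}\rangle=G_n$ for $\U$-almost every $n$. I then invoke the product theorem for approximate subgroups of finite simple groups of Lie type of bounded rank (Helfgott, Pyber-Szabo, Breuillard-Green-Tao): there is $\delta=\delta(r)>0$ such that any generating $K$-approximate subgroup $X$ of such a $G_n$ satisfies either $X^3=G_n$ or $|X^3|\ge |X|^{1+\delta}$. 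Applied to $X_{n,3}$, the first alternative contradicts $X_{n,3}^3\subsetneq G_n$; combined with the estimate $|X_{n,3}^3|\le K^2|X_{n,3}|$ (an immediate consequence of $(ii)$), the second alternative forces $|X_{n,3}|^\delta\le K^2$, hence $|G_n|\le K|X_{n,3}|\le K^{1+2/\delta}$, contradicting $|G_n|\to\infty$.

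The model-theoretic content of the argument is the extraction of the iterated symmetric neighborhood structure $X_{n,1}\supseteq X_{n,2}\supseteq X_{n,3}$ from the product-free set, which is precisely the preceding Corollary and rests ultimately on Theorem \ref{T:Udi} and Lemma \ref{L:StabPF}. The main obstacle is therefore external to the model theory developed so far: one must appeal to the non-trivial structural fact that in finite simple groups of Lie type of bounded rank, every large generating approximate subgroup exhausts the group under a bounded number of multiplications.
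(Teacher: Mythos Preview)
Your argument is correct, but it diverges from the paper's route at the crucial external input. The paper stays in the nonstandard setting: it picks a wide product-free type $p$ extending $A$, so that $\stab(p)$ is a proper normal type-definable subgroup of bounded index in $G^*$ by Theorem~\ref{T:Udi} and Lemma~\ref{L:StabPF}. It then invokes Point's theorem that an ultraproduct of finite simple groups of Lie type of bounded rank is simple, transfers this (via $\aleph_1$-saturation and a uniform bound on conjugacy-class width) to the saturated extension $G^*$, and concludes that $\stab(p)$ is trivial, forcing $G^*$ to be bounded---a contradiction. By contrast, you descend immediately to the finite groups via the preceding Corollary, extract a large $K$-approximate subgroup $X_{n,3}$ with $X_{n,3}^3\subsetneq G_n$, and then appeal to the product theorem of Pyber--Szab\'o and Breuillard--Green--Tao to reach the contradiction.

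Both approaches work, but the paper's external dependency is considerably lighter: Point's simplicity result (1999) is far more elementary than the product theorem (2010--2011), which is one of the deepest results in the area. Your route has the virtue of being entirely finitary once the Corollary is in hand---indeed, the ultraproduct you form in the first paragraph is never actually used, since the Corollary already applies to each $G_n$ individually; you could delete that paragraph. The paper's route, on the other hand, illustrates more directly how the model-theoretic machinery (type-definable normal subgroups of bounded index) interacts with algebraic simplicity, which is closer to the spirit of the section.
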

\begin{proof}
For otherwise, assume that there is a constant $c>0$ and an integer $r$ such that for each positive integer $n$ we can find a finite simple non-abelian group $G_n$ of Lie type of Lie rank $r$ containing a product-free set $A_n$ with $|A_n|\ge c|G_n|$. Fix a non-principal ultrafilter $\U$ on $\N$ and consider the ultraproduct $G = \prod_\U G_n$  of $(G_n)_{n\in\N}$ with respect to $\U$, seen as a nonstandard finite group living in a structure $M$. Let $A=\prod_\U A_n$, an internal product-free subset of $G$. Note that $A$ is indeed definable in $M$ and moreover it satisfies $|A|\ge c|G|$.  

Let $M^*$ be some very saturated elementary extension of $M$ and let $G^*$ and $A^*$ be the interpretaion in $M^*$ of the formulas $G(x)$ and $A(x)$, respectively. Notice that $A^*$ has the measure at least $c$ and it is product-free. Thus, we can find a complete wide type $p(x)\in S_{G}(M)$ containing the formula $A(x)$, which is necessarily product-free. Using Theorem \ref{T:Udi} and Lemma \ref{L:StabPF}, we see that $\stab(p)$ is a proper normal subgroup of $G^*$ of bounded index. 

On the other hand, we know by a result of Point \cite[Corollary 1]{Poi} that $G$  is a simple group. This means that the conjugacy class $a^G$ generates the whole group $G$ for every non-trivial element $a\in G$, that is to say
$$
G = \bigcup_{n\in \N} \left\{x_1\cdot \ldots \cdot x_n : x_1,\ldots,x_n\in a^G \cup (a^{-1})^G \cup\{1\} \right\}.
$$
As $G$ is $\aleph_1$-saturated, there exists some integer $k$  with the property that 
$$
G = \bigcup_{n\le k} \left\{x_1\cdot \ldots \cdot x_n : x_1,\ldots,x_n\in a^G \cup (a^{-1})^G \cup\{1\} \right\}
$$
for every non-trivial element $a$ of $G$. Since this is an elementary property in the language of groups, the same is true of $G^*$ and so $G^*$ is a simple group. Hence, we get that $\stab(p)$ is the trivial group and so $G^*$ has bounded size, a contradiction as $\kappa$ was taken to be arbitrarily large.
\end{proof}

Let us remark that in \cite{Gow}, Gowers answers negatively the question of Babai and S\'os for the family of all finite simple non-abelian groups. This can be deduced from Theorem 4.6 and 4.7 there. To obtain this more general result, note that the same proof as above will work for any ultraproduct without bounded index subgroups. For an ultraproduct of finite simple non-abelian groups, one can get this by following the lines of the proof of Theorem 3.1 of \cite{Pil1} due to Pillay. However, his argument relies on the work of Liebeck and Shalev \cite[Theorem 1.1]{LiSh} which depends on the Classification of Finite Simple Groups. Instead, we use the structure of approximate subgroups to do this.

\subsection{Abelian-by-bounded quotients}
In addition to the stabilizer theorem presented in the previous section, we need the structure theorem of approximate subgroups \cite{BGT}. Consequently, before proceeding we briefly recall the definition and some results concerning approximate subgroups.

\begin{definition}
Let $k\ge 1$. A subset $A$ of a group $G$ is a $k$-approximate subgroup if it is symmetric,  contains the identity and $A^2$ is contained in $XA$ for some symmetric subset $X$ of size at most $k$. 
\end{definition}

Of course, a subgroup is an example of an approximate subgroup but there are many other examples. For instance, any large set with respect to the counting measure is an approximate subgroup. This can be deduce from the following version of Ruzsa's Covering Lemma, see for instance \cite[Fact 5]{MaWa}. Since its proof is short, we include if for completeness. 

\begin{fact}\label{F:AS}
Let $G$ be an ultraproduct of finite groups and let $k\ge 1$. If $A$ is an internal subset of $G$ satisfying $k|A|\ge |G|$, then the set $AA^{-1}$ is a $k$-approximate subgroup. In fact, the set $AA^{-1}$ is left $k$-generic, that is to say the whole group $G$ is covered by $k$ many translates of $AA^{-1}$.
\end{fact}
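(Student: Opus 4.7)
The proof I would write is a direct nonstandard adaptation of the classical Ruzsa covering lemma. Since the hypothesis $k|A|\ge|G|$ and the conclusions are first-order expressible (by \L os' theorem, once we note that the $X_n$ below can be taken of a fixed size), I would argue level-by-level in an ultraproduct representation $G=\prod_\U G_n$, $A=\prod_\U A_n$, and then transfer back.

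First I would fix $n$ in the $\U$-large set where $k|A_n|\ge|G_n|$, and choose a subset $X_n\subseteq G_n$ maximal with respect to the property that the left translates $\{xA_n:x\in X_n\}$ are pairwise disjoint. Such a maximum exists because $G_n$ is finite. Since these translates are pairwise disjoint subsets of $G_n$, each of size $|A_n|$, we get
$$
|X_n|\cdot|A_n|\;\le\;|G_n|\;\le\;k\,|A_n|,
$$
so $|X_n|\le k$. Moreover, by maximality of $X_n$, for every $g\in G_n$ there exists $x\in X_n$ with $gA_n\cap xA_n\neq\emptyset$, hence $g\in xA_nA_n^{-1}$. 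This gives $G_n=X_n\cdot A_nA_n^{-1}$ with $|X_n|\le k$.

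Next I would patch these $X_n$ together: padding with repetitions if necessary, assume $|X_n|=k$ for $\U$-almost all $n$, and let $X=\prod_\U X_n$. Enumerating representatives $X_n=\{x_{n,1},\dots,x_{n,k}\}$, the ultraproduct $X$ consists of the (at most) $k$ classes $[x_{\cdot,i}]$, so $|X|\le k$ in the standard sense. By \L os' theorem applied to the first-order sentence ``$G=X\cdot AA^{-1}$'' (here using that $A$, $G$ and $X$ are all internal), it follows that $G=X\cdot AA^{-1}$, which is the left $k$-generic conclusion. The approximate-subgroup property is then automatic: $AA^{-1}$ is symmetric and contains the identity, and $(AA^{-1})^2\subseteq G = X\cdot AA^{-1}$, where, if desired, we replace $X$ by $X\cup X^{-1}$ to ensure symmetry.

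There is no serious obstacle here; the only points that require mild care are ensuring that the maximal disjoint family can be chosen with a uniformly bounded cardinality (so that its ultraproduct is a genuinely finite subset of $G$ of size at most $k$), and that the defining property ``$G=X\cdot AA^{-1}$'' is expressible in the $L^*$-language so that \L os applies. Both are handled by the uniform bound $|X_n|\le k$ coming from the counting argument in the second paragraph.
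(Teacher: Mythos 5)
Your proof is correct and uses the same underlying idea (Ruzsa's covering lemma via a maximal disjoint family of left translates) as the paper, but you execute it fiberwise in the $G_n$ and then transfer with \L os, whereas the paper works directly in the ultraproduct $G$: it takes a maximal $X\subseteq G$ with $xA\cap yA=\emptyset$ for distinct $x,y\in X$ and bounds $|X|$ by the nonstandard counting measure, $|X|\mu(A)=\mu(XA)\le 1\le k\mu(A)$. The two routes are equivalent, and the paper's version is slightly shorter because the measure argument packages the counting step and the transfer in one stroke, while yours makes the transfer explicit — a fair trade, and your version makes the uniform bound $|X_n|\le k$ (needed so that $X=\prod_\U X_n$ is genuinely finite and the covering statement is first-order) more visible.
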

\begin{proof}
The set $AA^{-1}$ is symmetric and contains the identity element. Now, take $X$ to be a maximal subset of $G$ such that $xA\cap yA =\emptyset$ for every two distinct elements $x,y\in X$. We see that
$$
|X|\mu(A)= \mu(XA) \le 1 \le k\mu(A),
$$
implying that $|X|\le k$. In addition, we get that $G$ and hence $(AA^{-1})^2$ are contained in $XAA^{-1}$ by maximality of $X$. 
\end{proof}

We need \cite[Theorem 4.2]{BGT} due to Breuillard, Green and Tao, which corresponds to the nonstandard version of the main result of their paper, see also \cite[Theorem 1.6]{BGT}. In fact, we state it in a weaker form which is enough for our purposes since we only aim towards qualitative statements concerning finite group. We also refer the reader to \cite{vdD} for an excellent survey as well as to some unplished notes of Hrushovski \cite{Hrus2} for an entirely model-theoretic treatment of the following result.

\begin{theorem}[Breuillard, Green and Tao]\label{T:BGT}
Let $G$ be an ultraproduct of finite groups and let $A$ be an internal $k$-approximate subgroup. Then there exists an internal subgroup $G_0$ of $\langle A \rangle$ of finite index in $G$ and an internal normal subgroup $N$ of $G_0$ contained in $A^4$ such that the group $G_0/N$ is nilpotent.
\end{theorem}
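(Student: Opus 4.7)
The plan is to follow the three-layer strategy underlying the Breuillard--Green--Tao structure theorem: first build a nonstandard type-definable subgroup inside a fixed power of $A$ via a stabilizer-type argument, then turn the resulting bounded-index quotient into a locally compact, and indeed a Lie, group, and finally exploit Lie-theoretic rigidity to force nilpotency.

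First, I would show that the internal set $A$ behaves well under iterated multiplication. Since $A$ is a $k$-approximate subgroup, the standard Plünnecke--Ruzsa inequalities, translated into the nonstandard measure $\mu$ on $\langle A\rangle$, give $\mu(A^n)\le k^{n-1}\mu(A)$ and show that each power $A^n$ is a $k^{O(n)}$-approximate subgroup. In particular, every $A^n$ is wide in $\langle A\rangle$. I would then pick a wide complete type $p(x)\in S_{\langle A\rangle}(M)$ concentrating on $A$ and apply the analogue of Theorem \ref{T:Udi} inside $\langle A\rangle$: its stabilizer $\stab(p)=(pp^{-1})^2$ is a type-definable, normal, bounded-index subgroup of the type-definable group $\langle A\rangle_M^{00}$, and since $p\subseteq A$ and $A=A^{-1}$, we get $\stab(p)\subseteq (AA^{-1})^2\subseteq A^4$.

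Second, I would pass to the quotient. The coset space $H:=\langle A\rangle_M^{00}/\stab(p)$ carries the logic topology, which makes it a compact Hausdorff topological group; allowing an extra open layer around the identity one gets a locally compact group in which $\pi(A)$ is a compact neighborhood of the identity. At this point I would invoke the Gleason--Yamabe theorem: $H$ has an open subgroup $U$ which is an inverse limit of Lie groups, and after quotienting $U$ by a sufficiently small compact normal subgroup we obtain a connected real Lie group $L$ in which the image of $A$ is still a relatively compact symmetric neighborhood of $1$ satisfying an approximate-subgroup condition.

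Third comes the heart of the matter, and what I expect to be the main obstacle: concluding that $L$ is nilpotent. This is the genuinely Lie-theoretic part of the argument, and does not follow from the model-theoretic machinery alone. One introduces Breuillard's escape norm attached to the compact neighborhood $\pi(A)$, uses it to control one-parameter subgroups and commutators, and then applies a Jordan--Margulis--type rigidity statement for real Lie groups to deduce that the Lie algebra of $L$ is nilpotent. I would then pull back the resulting nilpotent quotient: the preimage under the projection $\langle A\rangle_M^{00}\to L$ gives a type-definable subgroup $G_0^\circ$ of bounded index in $\langle A\rangle$ and a type-definable normal subgroup $N^\circ\subseteq A^4$ with $G_0^\circ/N^\circ$ nilpotent.

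Finally, I would use a standard compactness argument to pass from type-definable to internal data: approximate $G_0^\circ$ from outside by an internal subgroup $G_0$ and $N^\circ$ from outside by an internal normal subgroup $N\subseteq A^4$, ensuring that the nilpotency relations of bounded length are preserved and that $G_0\subseteq \langle A\rangle$ has finite index. This yields the internal $G_0$ and $N$ required by the theorem, with $G_0/N$ nilpotent of some bounded step.
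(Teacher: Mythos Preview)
The paper does not prove Theorem~\ref{T:BGT} at all: it is quoted as a black box from \cite[Theorem~4.2]{BGT}, with pointers to van den Dries' survey \cite{vdD} and Hrushovski's notes \cite{Hrus2} for expositions. So there is no ``paper's own proof'' to compare against; the paper simply uses the result.

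Your outline is a fair high-level sketch of the actual Breuillard--Green--Tao/Hrushovski argument, but several steps are imprecise as written. First, the quotient you form is not compact: $\langle A\rangle$ is in general only a directed union of internal sets, not definable, so $\langle A\rangle/\stab(p)$ is only locally compact (with $\pi(A)$ a compact neighbourhood of the identity), and it is this local compactness that feeds into Gleason--Yamabe. Writing it first as a compact group and then ``allowing an extra open layer'' is backwards. Second, the nilpotency step is not just ``Jordan--Margulis-type rigidity'': one needs the specific no-small-subgroups analysis and the trapping/escape arguments to show that the relevant Lie quotient has no compact factor and that the image of $A$ forces the Lie algebra to be nilpotent; this is the substantial part of \cite{BGT} and cannot be black-boxed into a single sentence. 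Third, the final compactness step requires more than approximating from outside: you must simultaneously arrange that $N$ is normal in $G_0$, that $N\subseteq A^4$, and that the finitely many commutator identities witnessing nilpotency of bounded step hold modulo $N$, all at the level of internal sets. None of these are fatal, but as a proof the sketch would need substantial expansion at each of these points --- which is exactly why the present paper chooses to cite the result rather than reprove it.
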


Notice that this structure theorem can be trivialised if $A^4=G$, since then one can take $G_0=N=G$. Our aim is to apply Theorem \ref{T:BGT} to a set of the form $AA^{-1}$ where $A$ is a given product-free subset. However, {\it a priori} the set $(AA^{-1})^2$ might be equal to the whole group; for instance this is easily seen to happen in cyclic finite groups when $A$ is a large enough product-free set. Nevertheless, as we see, one can circumvent this issue by using an easy compactness argument when working in the ultraproduct.

\begin{proposition}\label{T:GenCase}
Let $G$ be an ultraproduct of finite groups containing a product-free set $A_0$ of positive measure. Then there exists an internal symmetric subset $A$ of $A_0A_0^{-1}$ which is left generic, an internal subgroup $G_0$ of $\langle A \rangle$ of finite index in $G$ and an internal normal subgroup $N$ of $G_0$ with the following properties: 
\begin{enumerate}[$(i)$]
\item the set $A^4$ contains $N$ and it is properly contained in $G$, and
\item the group $G_0/N$ is nilpotent.
\end{enumerate}
\end{proposition}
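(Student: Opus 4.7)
The plan is to use a compactness argument, driven by the stabilizer of a wide product-free type, to shrink $A_0A_0^{-1}$ down to an internal $k$-approximate subgroup $A$ with $A^4\subsetneq G$, and then to feed $A$ into the Breuillard--Green--Tao structure theorem. The main obstacle, flagged in the paragraph preceding the statement, is precisely arranging that $A^4$ be properly contained in $G$; otherwise Theorem~\ref{T:BGT} trivialises (one could take $G_0=N=G$).

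First, since $A_0$ has positive measure, the formula ``$x\in A_0$'' extends to a complete wide type $p(x)\in S_G(M)$ over a small model $M$. As $p\subseteq A_0$, the type $p$ is again product-free, so Lemma~\ref{L:StabPF} together with Theorem~\ref{T:Udi} yields that $\stab(p)=(pp^{-1})^2$ is a proper type-definable subgroup of $G$, working in a sufficiently saturated elementary extension. Because the identity lies in $pp^{-1}$ and $\stab(p)$ is a group, we get $(pp^{-1})^4\subseteq \stab(p)\subsetneq G$. Fixing any $g\notin\stab(p)$, the partial type asserting $x_1x_2^{-1}x_3x_4^{-1}x_5x_6^{-1}x_7x_8^{-1}=g$ with each $x_i$ realising $p$ is inconsistent; by saturation there is therefore a single formula $\phi(x)\in p$ with $g\notin(\phi\phi^{-1})^4$. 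Setting $A:=\phi\phi^{-1}$, one obtains an internal symmetric subset of $A_0A_0^{-1}$ which contains the identity and satisfies $A^4\subsetneq G$.

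Since $p$ is wide and $\phi\in p$, the set $\phi$ has positive measure, so Fact~\ref{F:AS} applied to $\phi$ shows that $A$ is left $k$-generic and a $k$-approximate subgroup for a suitable integer $k$. Applying Theorem~\ref{T:BGT} to $A$ produces an internal subgroup $G_0\leq\langle A\rangle$ of finite index in $G$ and an internal normal subgroup $N\lhd G_0$ with $N\subseteq A^4$ and $G_0/N$ nilpotent. Combined with the proper containment $A^4\subsetneq G$ arranged above, this gives conclusions $(i)$ and $(ii)$. All relevant properties are first-order in the expanded language $L^*$, so they descend from the saturated extension back to the original ultraproduct $G$ without difficulty.
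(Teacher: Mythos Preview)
Your argument is correct and follows essentially the same route as the paper: pick a wide product-free type $p$ inside $A_0$, use Lemma~\ref{L:StabPF} and Theorem~\ref{T:Udi} to see $(pp^{-1})^4\subsetneq G$, extract by compactness a definable $\phi\in p$ with $(\phi\phi^{-1})^4\subsetneq G$, set $A=\phi\phi^{-1}$, and then invoke Fact~\ref{F:AS} and Theorem~\ref{T:BGT}. The only cosmetic gap is that you should take $\phi$ to imply $A_0(x)$ (replace $\phi$ by $\phi\wedge A_0$, which is still in $p$) so that $A=\phi\phi^{-1}\subseteq A_0A_0^{-1}$ is guaranteed.
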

\begin{proof}
As usual, we see $G$ as a nonstandard finite group defined in a suitable structure $\bar M$. Let $M$ be an  elementary substructure of $\bar M$ of countable size and let $p(x)\in S_G(M)$ be a wide type containing the formula $A_0(x)$. Since $A_0$ is product-free, so is $p$ and consequently $pp^{-1}$ is contained in a proper subgroup of $G$ by Lemma \ref{L:StabPF}. In particular, we then have that $(pp^{-1})^4\subsetneq G$ and consequently, a compactness argument yields the existence of an $M$-definable superset $B$ of $p$, contained in $A_0$, such that $(BB^{ -1})^4 \subsetneq G$. 
Note that $B$ is an internal set and has positive measure, since it contains $p$. Set $A=BB^{-1}$. By Fact \ref{F:AS}, we see that $A$ is a $k$-approximate subgroup, for a suitable positive integer $k$, and also that it is left $k$-generic. 

 Using now Theorem \ref{T:BGT}, we obtain  an internal subgroup $G_0$ of $\langle A \rangle$ of finite index in $G$ and an internal normal subgroup $N$ of $G_0$ satisfying the desired properties. 
\end{proof}

As an application we get the following statement for finite groups. 

\begin{cor}
For any $c>0$, there is some $m=m(c)$ such that the following holds. Suppose that $G$ is a finite group containing a product-free subset of size at least $c|G|$. Then, there exists a non-perfect subgroup $H$ of $G$ of index at most $m$.
\end{cor}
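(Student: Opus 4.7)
The plan is to proceed by contradiction via an ultraproduct argument, mirroring the strategy of the preceding corollary. Assume the statement fails for some $c > 0$: for each $n \in \N$ there is a finite group $G_n$ with a product-free subset $A_n$ of size at least $c|G_n|$, yet every subgroup of $G_n$ of index at most $n$ is perfect. Fix a non-principal ultrafilter $\U$ on $\N$ and form $G = \prod_\U G_n$ and $A_0 = \prod_\U A_n$, so that $G$ is a nonstandard finite group and $A_0$ is an internal product-free subset with $\mu(A_0) \geq c$. By {\L}o\'s's theorem it suffices to produce an internal non-perfect subgroup of $G$ of bounded standard-finite index, since for $\U$-almost all $n$ this will give a non-perfect subgroup of $G_n$ of the same bounded index, contradicting the hypothesis once $n$ exceeds this bound.

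Applying Proposition \ref{T:GenCase} to $G$ and $A_0$, I obtain an internal symmetric left-generic set $A \subseteq A_0 A_0^{-1}$, an internal subgroup $G_0 \leq \langle A \rangle$ of finite index in $G$, and an internal normal subgroup $N$ of $G_0$ with $N \subseteq A^4 \subsetneq G$ and $G_0/N$ nilpotent. The intended candidate is $H := G_0$: whenever $N \subsetneq G_0$, the quotient $G_0/N$ is a non-trivial nilpotent group, so its abelianization $G_0/(N \cdot [G_0, G_0])$ is non-trivial; this forces $[G_0, G_0] \subsetneq G_0$, and since $[G_0,G_0]$ is an internal set (bounded-length products of commutators in the internally finite group $G_0$), this makes $G_0$ internally non-perfect.

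The main obstacle is the degenerate case $N = G_0$, in which Proposition \ref{T:GenCase} only tells us that $G_0$ is a proper internal subgroup of $G$ of finite index contained in $A^4$, and $G_0$ itself may well be perfect. To handle this, I would pass to the normal core $G_0^{\mathrm{core}} := \bigcap_{g \in G} g G_0 g^{-1}$, which is the intersection of boundedly many internal conjugates of $G_0$, hence itself internal, normal in $G$, and of bounded standard-finite index $L \geq 2$. Picking an element $\bar g \in G/G_0^{\mathrm{core}}$ of prime order $p$ and letting $H$ be the preimage of $\langle \bar g \rangle$ in $G$ yields an internal subgroup $H$ of index $L/p$ whose quotient $H/G_0^{\mathrm{core}} \cong \mathbb{Z}/p$ is a non-trivial abelian group, so $[H, H] \subseteq G_0^{\mathrm{core}} \subsetneq H$ and $H$ is non-perfect. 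In either case the resulting $H$ delivers the desired internal non-perfect subgroup of bounded index, closing the argument via the transfer described above.
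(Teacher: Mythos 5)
Your proof follows the same overall strategy as the paper's own argument --- argue by contradiction, form the ultraproduct $G=\prod_\U G_n$ and $A_0=\prod_\U A_n$, apply Proposition~\ref{T:GenCase}, and transfer back via \L o\'s' Theorem --- but you have spotted a genuine gap in the paper's proof of this corollary and repaired it correctly. Proposition~\ref{T:GenCase} asserts only that $N\subseteq A^4\subsetneq G$ and that $G_0/N$ is nilpotent; it does \emph{not} assert that $G_0/N$ is non-trivial, whereas the paper's proof quietly reads off ``an internal normal subgroup $N$ of $H$ such that $H/N$ is non-trivial and nilpotent.'' The degenerate case $N=G_0$ can genuinely occur: take $G_n=S_n$ and $A_n$ the set of odd permutations, so that $A_nA_n^{-1}=\mathrm{Alt}_n$; then $G_0=\prod_\U\mathrm{Alt}_n$ is internally simple, and the only internal normal $N\trianglelefteq G_0$ with $G_0/N$ nilpotent is $N=G_0$. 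Your handling of this case --- pass to the normal core $\mathrm{core}(G_0)$, which is internal, normal and of finite index, then take the preimage of a cyclic subgroup of prime order of the non-trivial finite quotient $G/\mathrm{core}(G_0)$ --- is exactly the right fix and closes the argument.

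One small caveat in your Case~1. The parenthetical justification that $[G_0,G_0]$ is internal because it consists of ``bounded-length products of commutators'' is not warranted: there is no a priori bound on commutator width in the internally finite group $G_0$, so $[G_0,G_0]$ is in general only a countable increasing union of internal sets. Fortunately the argument does not need $[G_0,G_0]$ to be internal. Since $G_0/N$ is nilpotent, it is nilpotent of some standard class $s$, and the statement ``$N_n\trianglelefteq G_{0,n}$, $G_{0,n}/N_n$ is non-trivial and nilpotent of class at most $s$'' is first-order; by \L o\'s it therefore holds for $\U$-almost all $n$, and a non-trivial nilpotent quotient already makes $G_{0,n}$ non-perfect, which is all the transfer step requires.
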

\begin{proof}
Otherwise, there is a constant $c>0$ such that for every $n$ we can find a finite group $G_n$ containing a product-free subset $A_n$ with $|A_n|\ge c|G_n|$ and which in addition does not contain a non-perfect subgroup $H_n$ of index at most $n$. Fix now a non-principal ultrafilter $\U$ on $\N$ and consider the ultraproduct $G = \prod_\U G_n$  of $(G_n)_{n\in\N}$ with respect to $\U$ and let $A=\prod_\U A_n$, an internal product-free subset of $G$. The previous result yields the existence of an internal subgroup $H$ of $G$ of finite index, say $m$, and an internal normal subgroup $N$ of $H$ such that $H/N$ is non-trivial and nilpotent. Using \L os Theorem, we see that $G_n$ has a non-perfect subgroup of index at most $m$ for $\U$-almost all $n$, giving the desired contradiction.
\end{proof}
Note that in the statement above one can demand the subgroup $H$ to be normal. This is standard. Namely, given a subgroup $H$ of a group $G$ consider the group homomorphism $G\rightarrow \mathrm{Sym}(G/H)$ defined by $g\mapsto \tau_g$, where $\tau_g$ is the permutation of the coset space $G/H$ mapping $xH$ to $gxH$. It then follows that kernel of this homomorphism is a normal subgroup of $G$ contained in $H$ and whose index is bounded above by $|\mathrm{Sym}(G/H)|$.

As a consequence, combining this observation with the previous corollary we answer negatively the question of Babai and S\'os for the family of finite simple non-abelian groups, extending Proposition \ref{P:Simple1}. As previously remarked, this is a mere corollary of Theorem 4.6 and 4.7 of \cite{Gow}.

\begin{cor}\label{C:Simple2}
For any constant $c>0$, there is only a finite number of finite simple non-abelian groups $G$ containing a product-free set of size at least $c|G|$.
\end{cor}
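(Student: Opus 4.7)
The plan is to derive this as a direct consequence of the preceding corollary (which provides a non-perfect subgroup $H$ of bounded index $m=m(c)$ in any finite group with a product-free set of density at least $c$), together with the ``normalization'' observation given in the paragraph right before the statement, and to then exploit simplicity to force a uniform size bound.

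Suppose for contradiction that there exist infinitely many pairwise non-isomorphic finite simple non-abelian groups $G$, each containing a product-free set of size at least $c|G|$. Fix such a $G$. By the previous corollary, $G$ admits a non-perfect subgroup $H$ of index at most $m=m(c)$. Applying the observation recalled just before the statement to the action of $G$ on the coset space $G/H$, we obtain a normal subgroup $N$ of $G$ contained in $H$ whose index is bounded above by $|\mathrm{Sym}(G/H)|\le m!$.

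Now invoke simplicity of $G$: since $G$ has no proper non-trivial normal subgroups, either $N=G$ or $N=\{1\}$. The first case is impossible: if $N=G$ then $H=G$, but a finite simple non-abelian group is perfect (its commutator subgroup is a non-trivial normal subgroup, hence equals $G$), contradicting that $H$ is non-perfect. Therefore $N=\{1\}$, which means that the homomorphism $G\to \mathrm{Sym}(G/H)$ is injective, and hence $|G|\le m!$.

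This gives a uniform upper bound $m!$ on the order of any finite simple non-abelian group with a product-free set of density $\ge c$, so up to isomorphism there are only finitely many such groups, contradicting our assumption. No serious obstacle is expected; the only point requiring a moment's care is verifying that a finite simple non-abelian group is perfect, which is immediate from the fact that its commutator subgroup is a non-trivial (because $G$ is non-abelian) normal subgroup.
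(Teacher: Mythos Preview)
Your proof is correct and follows precisely the approach the paper indicates: combine the previous corollary (non-perfect subgroup of bounded index) with the normalization observation to get a normal subgroup of index at most $m!$, then use simplicity to conclude $|G|\le m!$. The only cosmetic remark is that the contradiction framing is unnecessary, since your argument directly yields the uniform bound $|G|\le m(c)!$; but this does not affect correctness.
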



\section{Compactifications of ultraproducts}\label{S:DefCom}

In this section we relate the existence of product-free sets of positive measure with certain definable compactifications. We first recall the definition of compactification of a (discrete) group.

\begin{definition}
A {\em compactification} of a (discrete) group $G$ is a compact Hausdorff topological group $\H$ and a group homomorphism $\rho : G \rightarrow \H$ with dense image.
\end{definition} 

Among the possible compactifications of $G$, there is a universal one called the {\em Bohr compactification of $G$} which is usually denote by $bG$. Namely, a compactification $b : G \rightarrow bG$ such that for every compactification $\rho : G \rightarrow \H$ there is a unique continuous surjection $\tau : bG\rightarrow \H$ such that $\rho = \tau \circ b$. 

Next, we shall explain how one can describe and generalize Bohr compactifications from a model theoretic point of view.

Let $G$ be a definable group in some structure $M$, not necessarily saturated.

\begin{definition}
A {\em definable group compactification} of $G$ with respect to $M$ is a compact Hausdorff topological group $\H$ and a group homomorphism $\rho: G\rightarrow \H$ with a dense image satisfying the following continuity property: given two disjoint closed subsets $C_1$ and $C_2$ of $\H$ then there exists a definable subset $C$ of $G$ such that 
$$
\rho^{-1}(C_1)\subseteq C \ \text{ and } \ \rho^{-1}(C_2)\subseteq G\setminus C
.$$
\end{definition}

When all subsets of $G$ are definable in $M$, then a definable compactification of $G$ is nothing else than a compactification. But in general, not all subsets of $G$ might be definable. 
In \cite[Proposition 3.4]{GPP}, it is shown that there is always a universal definable compactification, yielding an alternative proof of the existence of the Bohr compactification. To describe it, consider a $\kappa$-saturated elementary superstructure $M^*$ of $M$, for a sufficiently large cardinal $\kappa$, and let $G^*$ be the interpretation of the formula defining $G$ in $M^*$. Consider the group $(G^*)_{M}^{00}$, the smallest subgroup of $G^*$ which is type-definable over $M$ and has bounded index in $G^*$. Then, the universal definable compactification is given by the group $G^*/(G^*)_M^{00}$ equipped with the logic topology and the natural homomorphism from $G$ to $G^*/(G^*)_M^{00}$ induced by the identity embedding from $G$ into $G^*$.  

\subsection{Bohr compactification}
Concerning ultraproducts of finite groups, Pillay in \cite[Theorem 3.1]{Pil1} proved that an ultraproduct of finite simple non-abelian groups has trivial Bohr compactification. More recently, Nikolov, Schneider and Thom \cite[Theorem 8]{NST} have extended Pillay's result, answering a question of Zilber.

\begin{theorem}[Nikolov, Schneider and Thom]
If $G$ is an ultraproduct of finite groups, then the identity component $(bG)^0$ of $bG$ is commutative.
\end{theorem}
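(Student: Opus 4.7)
The plan is to use Peter--Weyl duality to reduce to a statement about finite-dimensional unitary representations, and then to combine the classical Jordan--Schur theorem with pseudofiniteness of $G$.

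By Peter--Weyl, $bG$ embeds into a product $\prod_\alpha \mathrm{U}(n_\alpha)$ indexed by its continuous irreducible unitary representations; since $\mathrm{U}(n)$ is compact, each such representation corresponds by universality of $bG$ to an arbitrary group homomorphism $\pi_\alpha \colon G \to \mathrm{U}(n_\alpha)$. The identity component $(bG)^0$ embeds into $\prod_\alpha \overline{\pi_\alpha(G)}^0$, so it suffices to show, for every group homomorphism $\pi \colon G \to \mathrm{U}(n)$, that the identity component of $H := \overline{\pi(G)}$ is abelian.

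The Jordan--Schur theorem gives a constant $J(n)$ such that every finite subgroup of $\mathrm{U}(n)$ has an abelian normal subgroup of index at most $J(n)$. I would aim to upgrade this bound to $\pi(G)$ itself: if $\pi(G)$ has a normal abelian subgroup of index at most $J(n)$, then its closure in $H$ is closed of finite index, hence open, and therefore contains $H^0$, forcing $H^0$ to be abelian. A natural candidate is $G_\pi := \{g \in G : \pi(g) \in Z(\pi(G))\}$, which is normal in $G$ and has abelian $\pi$-image; one then needs $[G:G_\pi] \le J(n)$.

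The main obstacle is precisely this transfer step: $\pi$ is an abstract homomorphism, not internal in the $L^*$-structure, so \L os' theorem cannot be applied to $\pi$ itself. One approach is by contradiction. An excess family $g_1,\ldots,g_{J(n)+1}$ of representatives of distinct cosets of $G_\pi$, together with the elements witnessing failure of centralisation, is a finite configuration of elements of $G = \prod_\U G_n$; lifting them to componentwise data in $\U$-almost all $G_n$'s and applying Jordan componentwise to the finite subgroups they generate should yield, via \L os', the desired contradiction. A more robust alternative avoiding internality issues uses that $\pi(G) \subseteq \mathrm{U}(n)$ is a linear group: every finitely generated subgroup is residually finite by Mal'cev, Jordan applied to each finite quotient provides a normal abelian subgroup of index at most $J(n)$, and a pigeonhole argument over the boundedly many such subgroups, combined with a directed union argument, produces a normal abelian subgroup of index $\le J(n)$ inside $\pi(G)$ itself, completing the proof.
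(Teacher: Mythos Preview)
First, a framing note: the paper does not prove this theorem. It is quoted from \cite{NST} (their Theorem~8) and used as a black box in the proof of Corollary~\ref{C:NST}. So there is no paper proof to compare against; I evaluate your argument on its own.

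Your Peter--Weyl reduction is correct: it suffices to show, for every abstract homomorphism $\pi\colon G\to \mathrm{U}(n)$, that $\overline{\pi(G)}^{\,0}$ is abelian. The difficulty, as you identify, is the transfer step. However, both of the approaches you outline have genuine gaps.

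\textbf{Approach (a) fails} because Jordan's theorem requires a linear representation of the finite group in dimension $n$, and the components $G_n$ carry no such representation. You can lift the finitely many elements $g_i,h_{ij}$ to $(g_i)_n,(h_{ij})_n\in G_n$, and by \L os' the commutators $[\,(g_i)_n(g_j)_n^{-1},(h_{ij})_n\,]$ are nontrivial for $\U$-almost all $n$; but nontriviality in $G_n$ tells you nothing about how the subgroup they generate sits inside any $\mathrm{U}(n)$. The predicate ``$x\in\ker\pi$'' is not first-order in $L^*$, so \L os' gives no leverage on it, and there is nothing componentwise to which Jordan can be applied.

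\textbf{Approach (b) fails} for a more basic reason: it never uses that $G$ is an ultraproduct of finite groups, so if it worked it would prove that \emph{every} subgroup of $\mathrm{U}(n)$ is abelian-by-(index $\le J(n)$). That is false: $\mathrm{SU}(2)$ contains a free subgroup $F_2$, and $F_2$ surjects onto every finite group, so its finite quotients certainly do not all admit abelian normal subgroups of uniformly bounded index. The error is that the finite quotients supplied by Mal'cev's theorem are abstract quotients, not subgroups of $\mathrm{U}(n)$, so Jordan does not apply to them.

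What is actually needed is a substantive input from the theory of finite groups that goes well beyond Jordan. The proof in \cite{NST} relies on the Nikolov--Segal results on bounded generation and commutator width in finite (and profinite) groups, which in turn use the Classification of Finite Simple Groups. Roughly, one needs uniform control, over all finite groups, on how elements of the derived subgroup are expressed as products of commutators; this is exactly the kind of statement that transfers through the ultraproduct and forces $\pi(G)$ to have virtually abelian closure. Without an ingredient of that strength, pseudofiniteness alone together with Jordan is not enough.
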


As an easy consequence, we can characterise those ultraproducts of finite groups with a trivial Bohr compactification. Let us remark that a group has trivial Bohr compactification if and only if it is type-absolutely connected in the sense of Gismatullin \cite[Definition 3.2]{Gis}.

\begin{cor}\label{C:NST}
Let $G$ be an ultraproduct of finite groups. Then $bG$ is trivial if and only if $G$ is perfect and $G$ has no finite index subgroup.
\end{cor}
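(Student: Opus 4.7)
My plan is to prove both directions of the equivalence, with the backward one being the substantive step and invoking the Nikolov--Schneider--Thom theorem stated above.

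For the forward direction, I would argue by contrapositive. If $G$ has a proper finite index subgroup $H$, then the normal core of $H$ is a proper finite index normal subgroup $N$, so the canonical surjection $G\to G/N$ is a homomorphism into a non-trivial finite (hence compact Hausdorff) group, and by the universal property of the Bohr compactification this factors through $bG$, forcing $bG$ to be non-trivial. If instead $G$ is not perfect, then $A:=G/[G,G]$ is a non-trivial discrete abelian group; for any non-trivial element $a\in A$ there is a non-trivial homomorphism from the cyclic group $\langle a\rangle$ into $\mathbb R/\mathbb Z$, and divisibility of $\mathbb R/\mathbb Z$ (which makes it injective among abelian groups) extends this to a non-trivial character $A\to \mathbb R/\mathbb Z$. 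Composing with the abelianization map produces a non-trivial continuous homomorphism from $G$ to a compact Hausdorff group, so once again $bG$ is non-trivial.

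For the backward direction, assume $G$ is perfect and has no proper finite index subgroup. I would analyse $bG$ through its identity component $(bG)^0$ and the quotient $bG/(bG)^0$. The latter is a compact totally disconnected group, hence profinite, and is therefore the inverse limit of its finite continuous quotients. The image of $G$ is dense in $bG/(bG)^0$, and thus surjects onto every such finite quotient $F$; by our hypothesis on $G$ the group $F$ must be trivial. Therefore $bG/(bG)^0$ is itself trivial and $bG=(bG)^0$ is connected. Applying the Nikolov--Schneider--Thom theorem, $bG$ is abelian. The dense image of $G$ in $bG$ is then a perfect subgroup of an abelian group, hence trivial; by density $bG$ itself is trivial.

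The argument is essentially a packaging of standard structural facts about compact topological groups, with the deep input supplied by Nikolov--Schneider--Thom; I do not foresee a real obstacle. The conceptual content is simply that the assumption of no proper finite index subgroup collapses the totally disconnected (profinite) part $bG/(bG)^0$, while perfectness together with the Nikolov--Schneider--Thom theorem collapses the connected part $(bG)^0$.
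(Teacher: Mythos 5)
Your proof is correct, and the backward direction takes a genuinely shorter route than the paper's. Both arguments share the same skeleton: from ``no proper finite index subgroup'' one concludes $bG = (bG)^0$ (every finite continuous quotient of the profinite group $bG/(bG)^0$ would pull back to a proper finite index subgroup of $G$), and then the Nikolov--Schneider--Thom theorem gives that $bG$ is abelian. The two arguments diverge at the final step. The paper passes to a sufficiently saturated elementary extension $G^*$, uses $\aleph_1$-saturation of the original ultraproduct plus compactness to bound the commutator width of $G$, transfers this bound to $G^*$ to conclude that $G^*$ is perfect, deduces that $[G^*,G^*]=G^*\subseteq (G^*)_M^{00}$, and hence that $bG=G^*/(G^*)_M^{00}$ is trivial. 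You avoid this entire model-theoretic detour: the image $\rho(G)$ is a perfect subgroup (being a homomorphic image of the perfect group $G$) of the abelian group $bG$, hence trivial, and density together with Hausdorffness force $bG=\overline{\{1\}}=\{1\}$. The paper's route keeps everything in the type-definable framework it has already set up; your route trades that scaffolding for the elementary observation that a perfect dense subgroup of an abelian compact Hausdorff group trivializes the whole group, which is a genuine simplification. Your forward direction is also fine and somewhat more self-contained than the paper's, which simply cites \cite[Proposition 3.5]{Gis} for the perfectness claim.
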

\begin{proof}
Suppose that $bG$ is trivial. Thus, the group $G$ has no finite index subgroup. To see that $G$ is perfect we refer the reader to \cite[Proposition 3.5]{Gis} for an easy proof. 

For the converse, assume that $G$ is perfect and that $G$ has no finite index subgroup. Note that the latter means that the identity component $(bG)^0$ of $bG$ equals $bG$, and the former that
$$
G = \bigcup_{n\in \N} \big\{[a_1,b_1]\cdot \ldots \cdot [a_n,b_n] :a_1,b_1,\ldots,a_n,b_n\in G \big\}.
$$
Note that each subset involved in this union is definable in the pure language of groups. Since $G$ is an $\aleph_1$-saturated group, a model-theoretic compactness argument yields the existence of some positive integer $m$ such that any element of $G$ can be written as the product of at most $m$ many commutator elements. 

Now, consider the structure $M$ whose domain is $G$ together with its group structure, as well as a unary relation for each subset of $G$. Let $M^*$ be an elementary extension of $M$, sufficiently saturated, and let $G^*$ denote the interpretation of the formula $G(x)$ in $M^*$. As pointed out before, the Bohr compactification of $G$ is precisely the group $ G^*/ (G^*)_M^{00}$ equipped with the logic topology. Thus 
$$
(bG)^0 =  bG = G^*/ (G^*)_M^{00}
$$
and therefore the derived subgroup of $G^*$ is contained in $(G^*)_M^{00}$, by the aforementioned result of Nikolov, Schneider and Thom. On the other hand, since $G$ and $G^*$ are elementary equivalent (as pure groups), every element of $G^*$ can also be written as the product of at most $m$ many commutator elements of $G^*$. Hence, the group $G^*$ is perfect yielding that $bG = 1$, as desired.
\end{proof}

As an easy observation, note that this result (or its proof) yields that an ultraproduct of finite groups $(G_n)_{n\in \N}$ has non-trivial Bohr compactification whenever the groups $G_n$ are not perfect or the commutator width is not uniformly bounded among the $G_n$, {\it i.e.} there is no $k$ such that every commutator element of each $G_n$ can be written as the product of $k$ commutator elements.

\subsection{Internal compactification} Another natural compactification of an ultraproduct of finite groups is the case when the definable sets and the internal ones coincide. This motivates the following definition:

\begin{definition}
Let $G$ be an ultraproduct of finite groups. By an internal compactification of $G$ we mean a definable compactification of $G$ with respect to a structure $M$ such that the Boolean algebras of the internal and definable sets coincide.
\end{definition}

We denote the universal internal compactification of $G$ by $\mathrm{int}\, bG$.
For those readers not well-versed in model theory, one can explicitly construct the universal internal compactification by taking the completion of $G$ with respect to the
topology on $G$, whose neighbourhoods of the identity are the internal subsets $U$ of $G$ which admit a sequence $(U_n)_{n\in \N}$ of internal subsets of $G$ with $U_0=U$ satisfying for each $n$ that $U_{n+1}^2 \subseteq U_n$ and $U_{n+1}$ is symmetric and left generic. 

In \cite[Theorem 2.2]{Pil1}, Pillay proved using the structure theorem of approximate subgroups that the identity component $(\mathrm{int}\, bG)^0$ of $\mathrm{int}\, bG$ is commutative. In fact, notice that Proposition \ref{T:GenCase} (which also relies on the work of Breuillard, Green and Tao) is a variant of this. Next, we shall prove that these conditions are indeed equivalent, and relate them to the notion of quasirandom, originated in the work of Gowers, which we recall now. 

\begin{definition}
A group $G$ is {\em $d$-quasirandom} for some parameter $d\ge 1$ if all non-trivial unitary representations of $G$ have dimension at least $d$.
\end{definition}

Using the Peter-Weyl Theorem, we remark that a group $G$ has a trivial Bohr compactification if and only if it is $d$-quasirandom for every $d\ge 1$. Usually, an infinite group which is $d$-quasirandom for every $d\ge 1$ is also called {\em minimally almost periodic}. 

A more natural notion of quasirandomness for ultraproducts of finite groups, encompassing the structure coming from the finite setting, is the following. This corresponds to Definition 31 of \cite{BeTa}.

\begin{definition}
An {\em ultra quasirandom} is an ultraproduct $G=\prod_\U G_n$ of finite groups with the property that for every $d\ge 1$, the groups $G_n$ are $d$-quasirandom for $\U$-almost all $n$.
\end{definition}

In general an ultra quasirandom group might not be $d$-quasirandom for some $d\ge 1$. In fact, in \cite{Yan} Yang provides an example of an ultra quasirandom group which is not even $2$-quasirandom.  This example appears there as Example 1.7 and it is attributed to Pyber. We present it at the end of this section to describe an ultraproduct $G$ of finite groups with the property that $\mathrm{int}\, bG = 1$ but $bG\neq 1$.

We prove our main result. Note that the statement without condition $(b)$ follows from the finite setting. So strictly speaking, the only new equivalence is given by condition $(b)$.

\begin{theorem}\label{T:Main}
Let $G$ be an ultraproduct of finite groups. Then the following are equivalent:
\begin{enumerate}[$(a)$]
\item Every internal product-free subset of $G$ has measure zero.
\item The universal internal compactification $\mathrm{int}\, bG$ of $G$ is the trivial group.
\item If $A$ and $B$ are internal non-null subsets of $G$, then $\mu(AB)=1$.
\item If $A,B$ and $C$ are internal non-null subset of $G$, then $G=ABC$.
\item There is no internal proper finite quotient nor an internal abelian one.
\item The group $G$ is ultra quasirandom.
\end{enumerate} 
\end{theorem}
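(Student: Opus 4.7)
My plan is to establish $(a) \Leftrightarrow (b)$ directly, then connect $(b)$ to $(c)$ via Proposition \ref{P:Amalg}, and finally sketch $(c) \Rightarrow (d) \Rightarrow (a)$ together with the equivalences $(a) \Leftrightarrow (e) \Leftrightarrow (f)$ derived from the finite-setting tools already introduced. As the author remarks, the genuinely new content is condition $(b)$, so I focus the main effort there.

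For $(b) \Rightarrow (a)$, suppose $\mathrm{int}\, bG$ is trivial, equivalently $(G^*)^{00}_M = G^*$. If $A$ were internal, product-free, of positive measure, then a wide type $p \in S_G(M)$ containing $A(x)$ would itself be product-free, so by Lemma \ref{L:StabPF} and Theorem \ref{T:Udi} the subgroup $\stab(p) = (G^*)^{00}_M$ would be proper, contradicting $(G^*)^{00}_M = G^*$. The converse $(a) \Rightarrow (b)$ is the principal difficulty. Arguing contrapositively, suppose $H := \mathrm{int}\, bG$ is non-trivial and pick $h \in H \setminus \{e\}$; since $h^2 \neq h$, Hausdorffness together with continuity of multiplication produces a closed neighbourhood $C \subseteq H$ of $h$ with non-empty interior satisfying $C \cdot C \cap C = \emptyset$. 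Letting $\pi : G^* \to H$ denote the projection, the set $\tilde C := \pi^{-1}(C)$ is type-definable (closed sets in the logic topology pull back to type-definable sets) and is product-free because $\pi$ is a homomorphism. A standard saturation argument applied to the inconsistent partial type ``$x, y, z \in \tilde C \ \wedge \ xy = z$'' yields an internal product-free superset $B \supseteq \tilde C$. The remaining and most delicate point is to verify $\mu(B) > 0$: since $C$ has non-empty interior and $H$ is compact, finitely many translates $h_1 C, \ldots, h_k C$ cover $H$; lifting the $h_i$ to $g_i^* \in G^*$ gives $G^* = \bigcup_{i \leq k} g_i^* \tilde C \subseteq \bigcup_{i \leq k} g_i^* B$, whence $k \cdot \mu(B) \geq 1$ and $B$ contradicts $(a)$.

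For $(b) \Rightarrow (c)$, given non-null internal $A, B$ with $\mu(AB) < 1$, the set $D := G \setminus AB$ is non-null, and wide types $p_1, p_2, p_3 \in S_G(M)$ containing $A(x), B(x), D^{-1}(x)$ respectively are all concentrated in $H = G^*$; Proposition \ref{P:Amalg} then yields $p_1 p_2 p_3 = G^*$, and specialising to $e$ gives $a \in A, b \in B, c \in D^{-1}$ with $ab = c^{-1} \in D \cap AB$, a contradiction. The implications $(c) \Rightarrow (d) \Rightarrow (a)$ are elementary, the second using that a product-free $A$ satisfies $e \notin A A A^{-1}$, so $A A A^{-1} \neq G$ forces $\mu(A) = 0$ by $(d)$. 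The equivalence $(a) \Leftrightarrow (e)$ I would obtain via Proposition \ref{T:GenCase}: in the forward direction, a non-trivial internal finite quotient of order $m$ pulls back a non-identity singleton to an internal product-free set of density $1/m > 0$, and a non-trivial internal abelian quotient pulls back a product-free set of density bounded away from zero (for instance via a middle-third construction in a non-trivial cyclic quotient); conversely, Proposition \ref{T:GenCase} applied to a positive-measure product-free set yields either a proper internal finite-index subgroup (hence a proper internal finite quotient via its core) or $G_0 = G$ with $G/N$ non-trivial nilpotent, from which the internal normal subgroup $\prod_\U N_n [G_n, G_n]$ supplies a non-trivial abelian internal quotient. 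Finally, $(e) \Leftrightarrow (f)$ follows via Jordan's theorem: non-$d$-quasirandomness yields a quotient $G_n/N_n$ with an abelian subgroup of index at most $J(d)$, and refining the ultrafilter produces either an internal abelian quotient or an internal proper finite one.
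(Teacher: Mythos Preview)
Your argument is essentially correct and follows the paper's overall scheme, but two points deserve comment.

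For $(a) \Rightarrow (b)$ the paper takes the shorter route of using a proper coset of $(G^*)_M^{00}$ itself as the type-definable product-free set (a non-identity coset of a normal subgroup is automatically product-free), then applies compactness to get an $M$-definable product-free superset, which is left generic because the coset is wide. Your closed-neighbourhood construction in the compact quotient is correct and reaches the same conclusion, but is a detour: the coset already is the canonical product-free piece.

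For $(b) \Rightarrow (c)$ your route genuinely differs from the paper's, and here there is a small gap. You invoke Proposition~\ref{P:Amalg} with $H = G^*$, but that proposition is stated under the hypothesis that $H$ is the \emph{absolute} smallest type-definable bounded-index subgroup, and its proof passes to a larger model $N$ and uses $H = G_N^{00}$. Condition $(b)$ only gives $(G^*)_M^{00} = G^*$; to cite the proposition you need $(G^*)_N^{00} = G^*$ for all small $N \supseteq M$. This is true in the present setting (the null ideal is $\emptyset$-invariant, so the stabilizer---hence the connected component---does not depend on the base model), but it must be said. The paper sidesteps the issue by arguing directly: using Lemma~\ref{L:Intersection} it finds $g$ with $C := A \cap g B^{-1}$ non-null, picks a wide $p\in S_G(M)$ through $C$, and then Theorem~\ref{T:Udi} with $\stab(p)=G^*$ gives that $G^* \setminus p p^{-1}$ lies in a union of null $M$-definable sets, whence $\mu(CC^{-1})=1$ and $\mu(AB)\ge \mu(ABg^{-1})\ge \mu(CC^{-1})=1$. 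This uses only the connected component over $M$. (If you prefer the amalgamation route, it is also cleaner to let $p_3$ extend $D(x)$ rather than $D^{-1}(x)$: then already the first conclusion $a_1a_2=a_3$ of Proposition~\ref{P:Amalg} gives $a_3\in D^*\cap A^*B^*=\emptyset$ without the detour through $e\in p_1p_2p_3$.)

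The remaining implications match the paper; in fact your treatment of $\neg(e)\Rightarrow\neg(a)$ via Proposition~\ref{T:GenCase} spells out a step---passing from the internal nilpotent quotient to an internal abelian one via $\prod_\U N_n[G_n,G_n]$---that the paper leaves implicit.
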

\begin{proof}
Let $G$ denote an ultraproduct of finite groups. 

$(a)\Rightarrow (b)$. Assume that $G$ has no internal product-free set of positive measure. Suppose to get a contradiction that $\mathrm{int}\, bG$ is non-trivial. Let $M$ be the $1$-sorted structure with sort $G$ and with primitive relations the graph of the group multiplication as well a unary relation for each internal subset of $G$. Let $M^*$ be a sufficiently saturated elementary extension of $M$ and let $G^*$ denote the interpretation in $M^*$ of the formula $G(x)$ defining $G$. As explained above, we know that the $\mathrm{int}\, bG$ is given by the quotient of $G^*$ modulo $(G^*)_{M}^{00}$. Any proper coset of $(G^*)_{M}^{00}$ is a product-free set of $G^*$ and so we can find an $M$-definable subset $A^*$ of $G^*$, containing a proper coset of $(G^*)_{M}^{00}$, which is product-free. Note then that $A^*$ is left generic, by model theoretic compactness. 

Now, let $A$ be the interpretation in $M$ of the formula defining the $M$-definable set $A^*$. This is clearly an internal product-free set. Furthermore, since saying that the union of a concrete number of translates cover the whole group is an elementary statement, the set $A$ is also left generic. However, this yields that $A$ has positive measure,  a contradiction.

$(b)\Rightarrow (c)$. Suppose that $G$ has no non-trivial internal compactification, and let $A$ and $B$ be two internal subsets of $G$ of positive measure. By Lemma \ref{L:Intersection}, there exists some element $g\in G$ with the property that the set $A\cap gB^{-1}$ has positive measure and denote this internal set by $C$.

Now, regard $G$ as a nonstandard finite group defined in some structure $M$. Let $M^*$ be a sufficiently saturated elementary extension of $M$ and let $G^*$ and $C^*$ be the interpretation in $M^*$ of the formula $G(x)$ and $C(x)$ respectively. Fix some wide type $p(x)\in S_G(M)$ extending $C(x)$. Since the assumption yields that $G^*$ equals to $(G^*)_M^{00}$, using Theorem \ref{T:Udi} we see that $G^*\setminus pp^{-1}$ is contained in a union of $M$-definable sets of measure zero. Thus, any internal superset of $pp^{-1}$, for instance $C^*C^{*-1}$, must have full measure. Note that the value of the measure only depends on the formula and consequently, we get:
$$
\mu(AB)=\mu(ABg^{-1})\ge \mu(CC^{-1}) =1.
$$

$(c)\Rightarrow (d)$. Assume now that $(c)$ holds and let $A,B$ and $C$ be three wide subsets of $G$. To see that $G=ABC$, take an arbitrary element $g$ of $G$. Since $\mu(AB)=1$, an easy computation yields that 
$$
\mu(AB\cap gC^{-1}) = \mu(gC^{-1}) = \mu(C)
$$ 
and so $AB\cap gC^{-1}$ is non-empty, yielding $g\in ABC$.

$(d)\Rightarrow (a)$. Given an internal set $A$ of positive measure, set $B=A$ and $C=A^{-1}$. By assumption, we have that the identity element of $G$ belongs to $AAA^{-1}$ and so $A$ is not product-free. 

$(a)\Leftrightarrow (e)$. If $G$ has an internal subgroup of finite index, then any proper coset is an internal product-free set of positive measure. Additionally, if $G$ has an internal abelian quotient $\bar G$, then it has an internal product-free set $\bar A$ of positive measure. In fact, one can take $\bar A$ in such a way that $|\bar A|\ge 2 |\bar G|/7$, by \cite[Corollary 7.8]{BaSo}. We then obtain an internal product-free subset $A$ of $G$ with $|A|\ge 2|G|/7 $; namely, take $A$ to be the pre-image of $\bar A$ under the natural projection from $G$ onto $\bar G$. Altogether, we see that $(a)$ implies $(e)$. The converse follows from Proposition \ref{T:GenCase} (or alternatively from the work of Gowers).

Finally, we see the equivalence between condition $(f)$ and the rest. In fact, the equivalence $(e)\Leftrightarrow(f)$ is precisely \cite[Theorem 4.8]{Gow}. One direction is easy. For the other, one can alternatively use a classical theorem of Jordan asserting that a finite group of $\mathrm{U}_d(\mathbb C)$ has an abelian subgroup whose index only depends on $d$. 
\end{proof}

It is routine to check, taking ultraproducts and using \L os' Theorem (for internal sets), that the above result can be written as follows for a finite group. We omit the proof. 

\begin{cor}\label{C:MainFinite}
Let $G$ be a finite group of order $n$. Then the following statements are
equivalent, in the sense that for any constant $c$ there exist other constants only depending on $c$ such that if $G$ satisfies one statement with respect to the constant $c$, then $G$ satisfies all the others statements with respect to these other constants.
\begin{enumerate}[$(a)$]
\item There is a product-free subset of $G$ of size at least $c_1n$.
\item There are three subsets $A, B$ and $C$ of $G$ of size at least $c_2n$ such that $G \neq ABC$.
\item  There is a non-perfect subgroup of $G$ of index at most $1/c_3$.
\item The group $G$ is not $1/c_4$-quasirandom.

\end{enumerate}
\end{cor}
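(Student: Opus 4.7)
The plan is to deduce the corollary from Theorem~\ref{T:Main} via the standard compactness-transfer argument. For each pair of conditions $(i),(j)\in\{(a),(b),(c),(d)\}$, we show that $(i)$ with parameter $c_i$ implies $(j)$ with some $c_j=c_j(c_i)$ by contradiction: suppose that for every such $c_j$ there is a finite group violating the implication, so by diagonalization we obtain a sequence of finite groups $(G_n)_{n\in\N}$ each satisfying $(i)$ with parameter $c_i$ but with $(j)$ failing for $G_n$ with parameter $1/n$. Fix a non-principal ultrafilter $\U$ on $\N$ and let $G=\prod_\U G_n$, viewed as a nonstandard finite group in the expanded language $L^*$. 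We then invoke \L os' Theorem for internal sets to translate both the hypothesis and the negation of the conclusion into internal statements about $G$, and derive a contradiction from the appropriate equivalence in Theorem~\ref{T:Main}.

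The translations between finite and ultraproduct statements are the following. Condition $(a)$ transfers to the existence of an internal product-free set $A=\prod_\U A_n$ of measure at least $c_1$, and conversely. Condition $(b)$ transfers to the existence of three internal non-null sets with $ABC\subsetneq G$. Condition $(d)$, saying that $G_n$ is not $1/c_4$-quasirandom, transfers (after restricting $\U$ to a set where the non-trivial low-dimensional representation has a fixed dimension, using pigeonhole on $\{1,\dots,1/c_4\}$) to the failure of ultra quasirandomness of $G$, hence to condition $(f)$'s negation. Condition $(c)$ requires a brief additional step: a non-perfect subgroup $H_n$ of $G_n$ of index at most $1/c_3$ gives, by passing to its normal core (whose index is bounded by $(1/c_3)!$) and then to the abelianization of this core, an internal normal subgroup $N$ of $G$ with either $G/N$ finite non-trivial or $G/N$ non-trivial abelian. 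This corresponds exactly to the negation of Theorem~\ref{T:Main}~$(e)$. Conversely, an internal proper finite quotient or internal non-trivial abelian quotient of $G$ pulls back, by \L os, to a non-perfect finite-index subgroup of $G_n$ of uniformly bounded index for $\U$-almost all $n$.

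The main obstacle is verifying that each finite quantitative condition is faithfully reflected by its qualitative counterpart at the level of the ultraproduct; in particular, the passage between $(c)$ and Theorem~\ref{T:Main}~$(e)$ requires tracking uniform bounds through the normal-core construction, and the translation of $(d)$ requires the pigeonhole step to ensure a fixed representation dimension works for $\U$-almost all indices. Once these translations are set up, one simply chains the equivalences $(a)\Leftrightarrow(c)\Leftrightarrow(d)\Leftrightarrow(e)\Leftrightarrow(f)$ of Theorem~\ref{T:Main}, together with the equivalence $(a)\Leftrightarrow (b)$ deduced from $(a)\Leftrightarrow(d)$ in the proof of Theorem~\ref{T:Main}, to close every implication of the corollary.
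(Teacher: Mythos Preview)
Your approach is correct and matches the paper's: the paper explicitly says the corollary follows from Theorem~\ref{T:Main} by taking ultraproducts and using \L os' Theorem for internal sets, and omits the details as routine. Your translations are the intended ones; just note that the last paragraph conflates the labels of Theorem~\ref{T:Main} with those of the corollary, and the translation of condition~$(c)$ should be phrased more carefully (the witness to the failure of Theorem~\ref{T:Main}$(e)$ is the core $K=\prod_\U K_n$ when $K\neq G$, or the internal derived subgroup when $K=G$, rather than the single set $N=\prod_\U[K_n,K_n]$, whose quotient is only abelian-by-finite; conversely, from an internal proper finite quotient $G/N$ one recovers a non-perfect bounded-index subgroup of $G_n$ as the preimage of a cyclic subgroup of $G_n/N_n$, not as $N_n$ itself).
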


In fact, this result corresponds to a qualitative version of results due to Gowers in  \cite{Gow}, where the constants are determined. Furthermore, answering a question of Gowers, Nikolov and Pyber \cite[Theorem 3]{NikPyb} shown that the statements above are polynomially-equivalent using the Classification of Finite Simple Groups. 
Moreover, the equivalence with condition $(c)$ in the previous theorem has the following easy consequence.

\begin{cor}\label{C:AsymProd}
For any $c,\epsilon>0$ there exists some $d=d(c,\epsilon)$ such that the following holds. For any two subsets $A$ and $B$ of a $d$-quasirandom finite group $G$ of order $n$, we have that $|AB| \ge (1-\epsilon)|G|$ whenever $A$ and $B$ have size at least $cn$.
\end{cor}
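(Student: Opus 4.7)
The plan is a standard compactness/ultraproduct argument in the spirit of the other finite-to-nonstandard transfers in the paper, using the equivalence $(c)\Leftrightarrow(f)$ in Theorem \ref{T:Main} as the key ingredient.

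Suppose, towards a contradiction, that the statement fails. Then I can fix constants $c,\epsilon>0$ and, for each $d\in\N$, produce a $d$-quasirandom finite group $G_d$ of order $n_d$ together with subsets $A_d, B_d \subseteq G_d$ with $|A_d|, |B_d| \ge c n_d$ and $|A_d B_d| < (1-\epsilon) n_d$. Fixing a non-principal ultrafilter $\U$ on $\N$, I form the nonstandard finite group $G = \prod_\U G_d$ and the internal subsets $A = \prod_\U A_d$, $B = \prod_\U B_d$. By construction $\mu(A) \ge c$ and $\mu(B) \ge c$, so both are non-null.

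The next step is to observe that $G$ is ultra quasirandom: for every fixed $e \ge 1$ and every $d \ge e$, the group $G_d$ is $e$-quasirandom, so $e$-quasirandomness holds for $\U$-almost all indices. Hence condition $(f)$ of Theorem \ref{T:Main} is satisfied, which by the equivalence with $(c)$ yields $\mu(AB) = 1$. On the other hand, $AB$ is internal and by \L os' Theorem (for internal sets)
$$
\mu(AB) = \st\!\left(\frac{|AB|}{|G|}\right) = \st\!\left(\prod_\U \frac{|A_d B_d|}{n_d}\right) \le 1-\epsilon,
$$
since the inequality $|A_d B_d|/n_d < 1-\epsilon$ holds for every $d$. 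This contradicts $\mu(AB)=1$ and finishes the proof.

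There is no serious obstacle: the only non-routine ingredient is Theorem \ref{T:Main}, which is already established, and the verification that the ultraproduct of $d$-quasirandom groups (for $d\to\infty$) is ultra quasirandom is immediate from the definition together with the fact that a non-principal ultrafilter on $\N$ contains every cofinite set.
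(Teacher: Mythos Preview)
Your proof is correct and follows essentially the same argument as the paper: negate the conclusion to obtain a sequence of increasingly quasirandom counterexamples, pass to the ultraproduct to get an ultra quasirandom group with non-null internal sets $A,B$, apply Theorem \ref{T:Main} to obtain $\mu(AB)=1$, and contradict the bound $\mu(AB)\le 1-\epsilon$ coming from the construction. The only cosmetic difference is that you index the sequence directly by the quasirandomness parameter $d$, whereas the paper indexes by $n$ with $d_n\to\infty$; the content is identical.
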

\begin{proof}
Otherwise, there are two constans $c>0$ and $\epsilon>0$ such that for every $n$ we can find some finite $d_n$-quasirandom subgroup $G_n$ in a way that $\lim_n d_n=\infty$ and with the property that $G_n$ contains two subsets $A_n$ and $B_n$ of size at least $c|G_n|$ satisfying $|A_nB_n|<(1-\epsilon)|G_n|$. Consider the ultra quasirandom group $G=\prod_\U G_n$ and set $A=\prod_\U A_n$ and $B=\prod_\U B_n$. We see that $A$ and $B$ are non-null internal sets and so $\mu(AB)=1$ by Theorem \ref{T:Main}. However, this contradicts the fact that $|AB|<(1- \epsilon)|G|$, by construction. 
\end{proof}

\subsection{An example} To finish this section, we see that Pyber's example (see \cite[Example 1.7]{Yan}) yields the existence of an ultraproduct of finite groups where the Bohr compactification and the universal internal compactification do not agree.

\begin{example}
Let $p\ge 5$ be a prime and let $(G_n)_{n\in\N}$ be an infinite sequence of finite groups $G_n$ satisfying the property that $G_n$ is a perfect group with an element $a_n$ which cannot be written as the product of $n+1$ commutator elements, and that the only simple quotient of $G_n$ is $\mathrm{PSL}_2(\mathbb F_{p^n})$. Such a family of finite groups exists by for instance \cite[Lemma 2.1.10]{HoPl}. 

Now, let $G$ be the ultraproduct $\prod_\U G_n$ of $(G_n)_{n\in\N}$ with respect to some non-principal ultrafilter $\U$ on $\N$. Let $a$ be the ultraproduct of the sequence $(a_n)_{n\in\N}$ and note that it cannot be written as the product of $m$ commutator elements for any $m$. Consequently, the group $G$ is not perfect and so its Bohr compactification $bG$ is non-trivial by Corollary \ref{C:NST}, say. On the other hand, we see that any proper normal subgroup of $G_n$ has index at least $|\mathrm{PSL}_2(\mathbb F_{p^n})|$. Thus, using \L os' Theorem (for internal sets) we see that the group $G$ has no abelian-by-finite internal proper quotient and so its universal internal compactification $\mathrm{int}\, bG$ is the trivial group, by Theorem \ref{T:Main}.
\end{example}

\section{Profinite compactifications} \label{S:FinExp}

In the previous section we have shown, among other things, the equivalence between having an internal product-free subset of positive measure and admitting a non-trivial internal compactification. Thus, it is natural to expect that some topological or algebraic properties of $\mathrm{int}\, bG$ have some impact on the structure of the group, at the level of internal sets. 
In particular, given an ultraproduct $G$ of finite groups such that $\mathrm{int}\, bG$ is profinite, we have that $G$ has an internal subgroup of finite index if and only if $\mathrm{int}\, bG$ is non-trivial. Namely, if $\mathrm{int}\, bG$ is non-trivial and profinite,  it admits a fundamental system $\mathcal F$ of open neighbourhoods $U$ of the identity such that each $U$ is an open normal subgroup  and $\bigcap_{U\in \mathcal F} U = 1$. Since an open subgroup  is closed of  finite index, the preimage of each subgroup $U$ is an internal subgroup of finite index, by definition of internal group compactification.

\begin{cor}
Let $G$ be an ultraproduct of finite groups such that $\mathrm{int}\, bG$ is profinite. Then the following condition is equivalent to  conditions $(a)-(f)$ of Theorem \ref{T:Main}:
\begin{enumerate}[$(g)$]
\item The group $G$ has no internal proper finite quotient.
\end{enumerate}
\end{cor}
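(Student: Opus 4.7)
The plan is to reduce everything to showing $(g) \Leftrightarrow (b)$, since once this is in place the full chain of equivalences follows from Theorem \ref{T:Main}. The direction $(e) \Rightarrow (g)$ is immediate: condition $(e)$ in Theorem \ref{T:Main} already contains ``no internal proper finite quotient'' as one of its two clauses, so any of $(a)$--$(f)$ implies $(g)$. The substantive content therefore lies in the converse $(g) \Rightarrow (b)$, and this is precisely where the hypothesis that $\mathrm{int}\, bG$ be profinite will be used.

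For that direction I would argue by contrapositive. Assume that $\mathrm{int}\, bG$ is non-trivial and profinite, and let $\rho : G \to \mathrm{int}\, bG$ denote the canonical homomorphism. As recalled in the paragraph preceding the corollary, being profinite means $\mathrm{int}\, bG$ admits a fundamental system $\mathcal F$ of open normal neighbourhoods of the identity with $\bigcap_{U \in \mathcal F} U = \{1\}$. Since $\mathrm{int}\, bG \neq 1$, there must exist some $U \in \mathcal F$ that is a proper subgroup; fix such a $U$. The subgroup $\rho^{-1}(U)$ is then normal in $G$ (as the preimage of a normal subgroup) and has finite index in $G$ (since $\mathrm{int}\, bG / U$ is a finite group).

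Next I would verify that $\rho^{-1}(U)$ is internal and proper in $G$. Internality follows directly from the continuity property in the definition of definable compactification: $U$ is open and of finite index in a compact group, hence clopen, so applying the continuity property to the two disjoint closed sets $U$ and $\mathrm{int}\, bG \setminus U$ produces an internal set $C \subseteq G$ with $\rho^{-1}(U) \subseteq C$ and $\rho^{-1}(\mathrm{int}\, bG \setminus U) \subseteq G \setminus C$, forcing $C = \rho^{-1}(U)$. Properness follows from the density of $\rho(G)$: if $\rho^{-1}(U)$ were all of $G$, then $\rho(G) \subseteq U$, and since $U$ is closed this would give $\mathrm{int}\, bG = \overline{\rho(G)} \subseteq U$, contradicting the properness of $U$. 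Therefore $G/\rho^{-1}(U)$ is a non-trivial finite internal quotient of $G$, contradicting $(g)$.

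The main obstacle is essentially the bookkeeping of internality of $\rho^{-1}(U)$, and this is really handed to us by the very definition of ``definable compactification''; once this is noted, the rest is formal. Combining $(g) \Rightarrow (b)$ with the implication $(b) \Rightarrow (e) \Rightarrow (g)$ already available from Theorem \ref{T:Main}, the condition $(g)$ joins $(a)$--$(f)$ in the chain of equivalences, as required.
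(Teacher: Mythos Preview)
Your proposal is correct and follows essentially the same approach as the paper. The paper does not give a separate formal proof of this corollary; instead, the paragraph immediately preceding it sketches exactly your contrapositive argument for $(g)\Rightarrow(b)$ (profinite and non-trivial $\Rightarrow$ proper open normal subgroup $\Rightarrow$ internal proper finite-index subgroup via the definition of internal compactification), and the direction $(e)\Rightarrow(g)$ is immediate. Your write-up simply fills in the details the paper leaves implicit, in particular the verification of internality via the continuity clause and of properness via density of $\rho(G)$.
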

 
Since a proper coset of a subgroup is a product-free set, we see that the original question of Babai and S\'os has a positive answer in a strong way when one restricts the attention to families of finite groups whose ultraproduct admits a non-trivial profinite internal compactification. In fact, 
using Hrushovski's stabilizer theorem we obtain the following structure theorem, asserting that every product-free set is closely related to  a subgroup.

\begin{theorem}\label{T:FinExp}
Let $G$ be an ultraproduct of finite groups and assume that $\mathrm{int}\, bG$ is profinite. Suppose that $G$ contains an internal product-free set $A$ of positive measure. Then, there is an internal normal proper subgroup $H$ of $G$ of finite index which satisfies the following properties:
\begin{enumerate}[$(a)$]
\item it is contained in $(AA^{-1})^2$ with $\mu(H\setminus AA^{-1})=0$, and
\item a coset $C$ of $H$ is contained in $AA^{-1}A$ and the internal set $C\cap A$ has positive measure.
\end{enumerate}
\end{theorem}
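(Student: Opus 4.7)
The plan is to combine Hrushovski's stabilizer theorem (Theorem \ref{T:Udi}) with the profiniteness assumption on $\mathrm{int}\, bG$ in order to sandwich the stabilizer of a wide product-free type between internal subgroups of finite index. View $G$ as sitting inside the $L^*$-structure $M$ (in which the internal and definable sets coincide), and pass to a very saturated elementary extension $M^*$, writing $G^*$ and $A^*$ for the interpretations of the formulas defining $G$ and $A$. Since $A^*$ is internal, product-free and of positive measure, extend $A(x)$ to a wide product-free type $p(x)\in S_G(M)$. By Lemma \ref{L:StabPF} and Theorem \ref{T:Udi}, the stabilizer $\stab(p)=(pp^{-1})^2=(G^*)^{00}_M$ is a proper normal subgroup of $G^*$ of bounded index, $pp^{-1}p$ is a coset $\stab(p)a$ with $a\in p$, and $\stab(p)\setminus \st(p)$ is covered by a collection of $M$-definable null sets $\{N_j\}_j$.

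Next I exploit profiniteness. By Theorem \ref{T:Main} the compactification $\mathrm{int}\, bG\cong G^*/\stab(p)$ is non-trivial, and by hypothesis it is profinite; so the identity has a neighbourhood basis of clopen normal finite-index subgroups, at least one of which is proper. Under the logic topology each such subgroup pulls back to an $M$-definable, hence internal, normal, finite-index subgroup $H^*_\alpha$ of $G^*$ containing $\stab(p)$, with $\bigcap_\alpha H^*_\alpha=\stab(p)$.

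Three type-definable inclusions now hold over $M$: first, $\stab(p)\subseteq (A^*A^{*-1})^2$, using $p\subseteq A^*$ and $\stab(p)=(pp^{-1})^2$; second, $\stab(p)\setminus A^*A^{*-1}\subseteq \bigcup_j N_j$, using $\st(p)\subseteq pp^{-1}\subseteq A^*A^{*-1}$; third, $\stab(p)a\subseteq A^*A^{*-1}A^*$, from $pp^{-1}p=\stab(p)a$. A standard saturation argument reduces each of these to a finite sub-conjunction, and taking a common refinement (including at least one proper $H^*_\alpha$) produces a single internal, normal, proper, finite-index subgroup $H^*$ of $G^*$ which is contained in $(A^*A^{*-1})^2$, satisfies $\mu(H^*\setminus A^*A^{*-1})=0$, and fulfils $H^*a\subseteq A^*A^{*-1}A^*$. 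Moreover $p\subseteq \stab(p)a\cap A^*\subseteq H^*a\cap A^*$, so $\mu(H^*a\cap A^*)\geq \mu(p)>0$.

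Finally I descend to $G$. The subgroup $H^*$ is defined by an $L^*(M)$-formula, which in $M$ defines an internal, normal, proper, finite-index subgroup $H\leq G$. Properties (a) and (b) are first-order expressible in $L^*$: the condition $\mu(H\setminus AA^{-1})=0$ becomes the scheme $|H\setminus AA^{-1}|\leq |G|/n$ for each standard $n\in\N$ (using the cardinality primitives), and the coset condition becomes the sentence asserting the existence of $a\in G$ with $Ha\subseteq AA^{-1}A$ and $|Ha\cap A|\geq \epsilon|G|$ for a fixed rational $\epsilon\in(0,\mu(p))$. Each holds in $M^*$ by the previous paragraph, hence in $M$ by elementary equivalence. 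The main obstacle is arranging for a single internal finite-index subgroup to witness all three type-definable inclusions simultaneously and to produce a coset witness $a$ that descends from $M^*$ to $G$; both are handled by the standard saturation and compactness machinery, together with the definability of the counting measure in the language $L^*$.
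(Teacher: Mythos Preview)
Your proof is correct and follows essentially the same route as the paper's: extend $A(x)$ to a wide product-free type $p$, invoke Theorem~\ref{T:Udi} and Lemma~\ref{L:StabPF} to obtain $\stab(p)=(G^*)^{00}_M$ proper, use profiniteness of $\mathrm{int}\,bG\cong G^*/\stab(p)$ to write $\stab(p)$ as an intersection of $M$-definable finite-index subgroups, and then apply compactness to produce a single such $H^*$ witnessing all three inclusions before descending to $M$. Two minor remarks: the appeal to Theorem~\ref{T:Main} is redundant (you already know $\stab(p)$ is proper from Lemma~\ref{L:StabPF}), and writing ``$\mu(p)>0$'' is a slight abuse since $\mu$ is only defined on internal sets---what you mean, and what the paper uses, is that $H^*a\cap A^*$ has positive measure because it contains the wide type $p$.
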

\begin{proof}
Regard $G$ as a nonstandard finite group in some structure $M$ and let $M^*$ be a sufficiently saturated elementary extension of $M$. Let $G^*$ and $A^*$ be the interpretation in $M^*$ of the formula $G(x)$ and $A(x)$ respectively. Fix some wide type $p(x)\in S_G(M)$ extending $A(x)$ and note that $p$ is product-free as so is $A$. Using Theorem \ref{T:Udi} and Lemma \ref{L:StabPF}, we see that $\stab(p)$ is a proper $M$-type-definable subgroup of $G^*$ of bounded index. Furthermore, we have $\stab(p)=(G^*)_M^{00}$. It then follows that $G^*/\stab(p)$ is a non-trivial profinite group, equipped with the logic topology and consequently, as explained above, we get that $\stab(p)$ is the intersection of $M$-definable subgroups of finite index. 

On the other hand, we know by Theorem \ref{T:Udi} and Lemma \ref{L:StabPF} that the following holds:
\begin{enumerate}[$(i)$]
\item The set $\stab(p)\setminus pp^{-1}$ is contained in $(pp^{-1})^2$ and in a union of $M$-definable sets of measure zero. 
\item The set $pp^{-1}p$ is a proper coset of $\stab(p)$, say $\stab(p)u$.
\end{enumerate}
By $(i)$, we see that the $M$-type-definable group $\stab(p)$ is contained in $(A^*(A^*)^{-1})^2$ and in $A^*(A^*)^{-1} \cup C$ for some $M$-definable set $C$ of measure zero. Furthermore, note that $\stab(p)u$ is a subset of $A^*(A^*)^{-1}A^*$ by $(ii)$. Thus, a model-theoretic compactness argument yields the existence of an $M$-definable supergroup $H^*$ of $\stab(p)$ such that
$$
H^* \subseteq (A^*(A^*)^{-1})^2 \cap \left(A^*(A^*)^{-1} \cup C\right) \ \text{ and } \ H^*u\subseteq A^*(A^*)^{-1}A^*.
$$ 
In particular, the subgroup $H^*$ has finite index in $G^*$ and $H^*\setminus A^*(A^*)^{-1}$ has measure zero. In addition, note that the set $H^*u\cap A^*$ has positive measure, since it contains $p$. 

Finally, set $H$ to be the interpretation in $M$ of the formula defining $H^*$. As $M^*$ is an elementary extension of $M$ and the value of the measure only depends on the formulas, we see that $H$ satisfies the desired properties.
\end{proof}
 
Now, consider groups of finite exponent, {\it i.e.} groups satisfying the law $x^m=1$ for some fixed integer $m\ge 1$. We point out that the universal internal compactification of an  ultraproduct of finite groups of a given exponent  is non-trivial and profinite. 

\begin{lemma}
Let $G$ be an ultraproduct of finite groups and assume that $G$ has finite exponent. Then, the group  $\mathrm{int}\, bG$ is a non-trivial profinite group, {\it i.e.}  $\mathrm{int}\, bG \neq 1$ and  $(\mathrm{int}\, bG)^0 = 1$.
\end{lemma}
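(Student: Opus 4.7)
The plan is to prove the two conclusions separately. That $(\mathrm{int}\, bG)^0=1$ follows from a soft density argument: the canonical homomorphism $\rho\colon G\to\mathrm{int}\, bG$ has dense image, the identity $x^m=1$ holds throughout $\rho(G)$, and the subset $\{x\in\mathrm{int}\, bG:x^m=1\}$ is closed by continuity of the group operations, so it coincides with $\mathrm{int}\, bG$. Thus $\mathrm{int}\, bG$ is a compact Hausdorff topological group of exponent dividing $m$, and such a group is automatically profinite: every compact Hausdorff group is an inverse limit of compact Lie groups sharing the bounded exponent, and a connected compact Lie group of finite exponent must be trivial, since its maximal torus would otherwise supply elements of infinite order.

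For the non-triviality, I would suppose toward a contradiction that $\mathrm{int}\, bG=1$. By condition $(e)$ of Theorem \ref{T:Main}, the group $G$ then has neither a proper internal abelian quotient nor a proper internal finite quotient. Writing $G=\prod_\U G_n$, the internal subgroup $D=\prod_\U[G_n,G_n]$ yields the internal abelianization $G/D\cong\prod_\U G_n/[G_n,G_n]$, which must therefore be trivial; by \L os' Theorem this forces $G_n$ to be perfect for $\U$-almost every $n$. Without loss of generality we may assume every $G_n$ is perfect.

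The main obstacle is to produce a proper internal finite quotient of this perfect $G$. My plan is, for each $n$, to choose a maximal proper normal subgroup $N_n$ of the finite perfect group $G_n$, so that $G_n/N_n$ is a non-abelian finite simple group of exponent dividing $m$. The Classification of Finite Simple Groups implies that there are only finitely many isomorphism classes of non-abelian finite simple groups of exponent at most $m$: among the alternating groups $A_n$ the exponent grows with the degree, among the groups of Lie type it grows with both the rank and the size of the field, and there are only $26$ sporadic groups. Letting $C(m)$ bound their orders, we obtain $[G_n:N_n]\le C(m)$ for every $n$, so the internal subgroup $N=\prod_\U N_n$ is a proper internal subgroup of finite index in $G$, producing the promised internal finite quotient and yielding the desired contradiction.
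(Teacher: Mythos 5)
Your proof is correct and follows the same two-pronged structure as the paper's. For profiniteness, the density argument is a valid way to see that $\mathrm{int}\, bG$ has finite exponent, and your Lie-theoretic derivation (connected compact Lie groups of finite exponent are trivial, so the inverse limit of compact Lie quotients is an inverse limit of finite groups) re-proves the theorem of Iltis that the paper simply cites. For non-triviality, your detour through Theorem~\ref{T:Main}$(e)$ to reduce to the perfect case is correct but unnecessary, and in fact it quietly imports heavier machinery: the implication you need, $(b)\Rightarrow(e)$ in Theorem~\ref{T:Main}, travels through Hrushovski's stabilizer theorem and the approximate-subgroup results. The paper instead argues more elementarily and self-containedly: it takes maximal normal proper subgroups $H_n \le G_n$ directly, without first reducing to the perfect case, and splits into two cases for the simple quotients $G_n/H_n$ --- cyclic of prime order, hence of order at most $m$, or non-abelian simple of exponent dividing $m$, hence of bounded order by CFSG --- and in either case concludes by \L os that $G/\prod_\U H_n$ is a finite internal quotient. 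Both proofs ultimately rest on the same key input from CFSG, namely that there are only finitely many non-abelian finite simple groups of exponent at most $m$.
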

\begin{proof}
We first prove that $\mathrm{int}\, bG$ is profinite. To see this, note that $\mathrm{int}\, bG $ is a group of finite exponent, by the description given in the previous section. Thus, the claim follows by using Theorem 4.5 of \cite{Ilt}, which asserts that a compact Hausdorff group of finite exponent is profinite.

To show that $\mathrm{int}\, bG$ is non-trivial, assume that $G=\prod_\U G_n$. For each $n$, let $H_n$ be a maximal normal proper subgroup of $G_n$ and note that every quotient $G_n/H_n$ is simple. Set $H$ to denote $\prod_\U H_n$, an internal normal proper subgroup of $G$. 

Suppose that $G$ has exponent $m$. We may distinguish two cases. If $\U$-almost all quotients $G_n/H_n$ are cyclic, then all have at most order $m$. Otherwise, if $\U$-almost all quotients $G_n/H_n$ are simple non-abelian groups, then $\U$-almost all are isomorphic, since by the Classification of Finite Simple Groups there is only a finite number of finite simple non-abelian groups of exponent $m$. Therefore, we get that $G/H$ is a finite group and so $\mathrm{int}\, bG \neq 1$, as desired.
\end{proof}

Therefore, the previous theorem applies to groups of finite exponent. As a consequence, we get the following result in the finite setting.

\begin{cor}\label{C:FinExp}
For any $c,\epsilon >0$ and $m\ge 1$, there exists some integer $k=k(c,\epsilon,m)$ and some constant $\delta=\delta(c,\epsilon,m)$ such that the following holds. Suppose that $G$ is a finite group of order $n$ and exponent $m$ and let $A$ be a product-free set $A$ of size at least $cn$. Then there is a normal proper subgroup $H$ of $G$ of index at most $k$ which satisfies the following properties:
\begin{enumerate}[$(a)$]
\item it is contained in $(AA^{-1})^2$ with $|H\setminus AA^{-1}|<\epsilon |G|$, and
\item some coset $C$ of $H$ is contained in $AA^{-1}A$ with $|C\cap A|\ge \delta|G|$.
\end{enumerate}
\end{cor}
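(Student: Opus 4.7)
The plan is to derive the corollary from Theorem \ref{T:FinExp} together with the preceding lemma by the standard ultraproduct transfer argument used repeatedly in the paper. First I would argue by contradiction: negating the conclusion and diagonalising in the parameters $k$ and $\delta$, I may fix constants $c,\epsilon>0$ and $m\ge 1$ and extract a sequence of finite groups $(G_n)_{n\in\N}$ of exponent $m$, each equipped with a product-free subset $A_n$ of size at least $c|G_n|$, so that no normal proper subgroup $H\trianglelefteq G_n$ of index at most $n$ simultaneously satisfies $H\subseteq (A_nA_n^{-1})^2$ and $|H\setminus A_nA_n^{-1}|<\epsilon|G_n|$ while admitting a coset $C\subseteq A_nA_n^{-1}A_n$ with $|C\cap A_n|\ge |G_n|/n$.

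Next, fix a non-principal ultrafilter $\U$ on $\N$ and form the ultraproduct $G=\prod_\U G_n$, regarded as a nonstandard finite $L^*$-structure. By \L os' theorem $G$ still satisfies the law $x^m=1$, so the preceding lemma guarantees that $\mathrm{int}\,bG$ is non-trivial and profinite. The internal set $A=\prod_\U A_n$ is product-free and satisfies $\mu(A)\ge c>0$, so Theorem \ref{T:FinExp} applies and produces an internal normal proper subgroup $H$ of $G$ of some finite index $k_0$, contained in $(AA^{-1})^2$, with $\mu(H\setminus AA^{-1})=0$, together with a coset $C$ of $H$ contained in $AA^{-1}A$ satisfying $\mu(C\cap A)=\delta_0>0$.

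It remains to transfer these properties back to the finite groups $G_n$. Since $H$, $C$ and the normalised counting measure are all $L^*$-definable, each of the relevant properties is expressible by a single first-order $L^*$-formula; in particular the quantitative bounds $[G:H]\le k_0$, $\mu(H\setminus AA^{-1})<\epsilon$ and $\mu(C\cap A)\ge \delta_0/2$ are first-order. Applying \L os' theorem for internal sets, these statements descend to internal subgroups $H_n\trianglelefteq G_n$ and cosets $C_n$ of $H_n$ with the analogous finite-group properties for $\U$-almost all $n$. Choosing any such $n$ with $n>\max(k_0,2/\delta_0)$ contradicts the choice of $(G_n,A_n)$, whence the corollary holds with $k=k_0$ and $\delta=\delta_0/2$.

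I do not anticipate any serious obstacle: the two preceding results do essentially all the work. The only care required is to arrange the negation of the conclusion so that a single ultraproduct argument refutes the corollary for \emph{all} potential choices of $k(c,\epsilon,m)$ and $\delta(c,\epsilon,m)$ simultaneously, which is achieved by the standard device of letting $k$ and $1/\delta$ tend to infinity along $n$.
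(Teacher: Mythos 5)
Your argument is correct and follows the same route as the paper: negate and diagonalise in $k$ and $1/\delta$, form the ultraproduct, invoke the lemma to get that $\mathrm{int}\,bG$ is non-trivial profinite, apply Theorem~\ref{T:FinExp}, and transfer back via \L os' theorem for internal sets. The only (welcome) refinement is your use of $\delta_0/2$ rather than $\delta_0$ when descending, which correctly accounts for the infinitesimal error hidden in the standard-part map.
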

\begin{proof}
As usual in this kind of proofs, we argue by contradiction. Negating the quantifiers, assume that there exists some constants $c,\epsilon>0$ and $m\ge 1$ such that for each natural number $n$ we can find a finite group $G_n$ of exponent $m$ containing a product-free set $A_n$ with $|A_n|\ge c|G_n|$ but $G_n$ does not contain a normal proper subgroup $H_n$ of index at most $k_n$ such that
\begin{enumerate}[$(i)$]
\item it is contained in $(A_nA_n^{-1})^2$ with $|H_n\setminus (A_nA_n^{-1})|< \epsilon |G_n|$, and
\item a right proper coset $H_nu_n$ of $H_n$ is contained in $A_nA_n^{-1}A_n$ and in addition $|H_nu_n\cap A_n|> (1/n)|G_n|$.
\end{enumerate}    
Now, consider the ultraproduct $G=\prod_\U G_n$ of $(G_n)_{n\in \N}$ and set $A=\prod_\U A_n$, an internal set satisfying $|A|\ge c|G|$. Note that $A$ is product-free and has positive measure, since one can easily see that $\mu(A)\ge c$. Since $\mathrm{int}\, bG$ is profinite by the previous lemma, Theorem \ref{T:FinExp} yields the existence of an internal normal proper subgroup $H$ of $G$ of finite index and a proper coset $Hx$ of $H$ which satisfy
$$
H\subseteq (AA^{-1})^2 \ , \ \mu(H\setminus (AA^{-1}))=0 \ , \ Hx\subseteq AA^{-1}A \ \text{ and } \ \mu(Hx\cap AA^{-1})>0.
$$ 
Set $\gamma=\mu(Hx\cap AA^{-1})$. Let $(x_n)_{n\in \N}$ be a sequence with $x_n\in G_n$ whose ultraproduct is $x$ and let $(H_n)_{n\in \N}$ be a sequence with $H_n$ a subset of $G_n$ such that $H=\prod_\U  H_n$. Hence, using \L os's Theorem (for internal sets), we conclude that for $\U$-almost all $n$ the set $H_n$ is indeed a normal proper subgroup of $G_n$ of index $[G:H]$ satisfying the following properties:
\begin{enumerate}[$(i)$]
\item it is contained in $(A_nA_n^{-1})^2$ with $|H_n\setminus (A_nA_n^{-1})|< \epsilon |G_n|$, and
\item the right proper coset $H_nx_n$ of $H_n$ is contained in $A_nA_n^{-1}A_n$ and satisfies $|H_nx_n\cap A_n|\ge \gamma |G_n|$.
\end{enumerate}   
However, this contradicts the construction of $G_n$ and $A_n$. 
\end{proof}

\bibliographystyle{plain}

\begin{thebibliography}{99}

\bibitem{BaSo}
L. Babai and V. T. S\'os.
\newblock {\em Sidon sets in groups and induced subgraphs of Cayley graphs}, Europ.
J. Combin. 6 (1985), 101--114.

\bibitem{BeTa}
V. Bergelson and T. Tao.
\newblock {\em Multiple recurrence in quasirandom groups}. Geom. Funct. Anal. 24 (2014), no. 1, 1--48. 

\bibitem{Bre} E. Breuillard. \emph{A brief introduction to approximate groups}. Thin Groups and Superstrong Approximation (Eds: E. Breuillard and H. Oh). MSRI Publications 61, 2014.

\bibitem{BGT}
E. Breuillard, B. Green and T. Tao.
\newblock {\em The structure of approximate groups}. Publ. Math. Inst. Hautes \'Etudes Sci. 116 (2012), 115--221. 

\bibitem{ChHr}
G. Cherlin and E. Hrushovski.
\newblock {\em Finite structures with few types}. Annals of Mathematics Studies, 152. Princeton University Press, Princeton, NJ, 2003.

\bibitem{vdD}
L. van den Dries.
\newblock {\em Approximate groups}. Ast\'erisque 367--368 (2015), 79--113.

\bibitem{vdDGold}
L. van den Dries and I. Goldbring.
\newblock {\em Hilbert's 5th problem}. Enseign. Math. 61 (2015), no. 1-2, 3--43.


\bibitem{Gis}
J. Gismatullin.
\newblock {\em Absolute connectedness and classical groups}. Preprint (2012), arXiv:1002.1516v5.

\bibitem{GPP}
J. Gismatullin, D. Penazzi and A. Pillay.
\newblock {\em On compactifications and the topological dynamics of definable groups}. Annals Pure and Appl. Logic. 165 (2014) 552--562.

\bibitem{Gow}
W. T. Gowers.
\newblock {\em Quasirandom groups}. Combin. Probab. Comput. 17 (2008), no. 3, 363--387. 

\bibitem{HoPl}
D. F. Holt and W. Plesken.
\newblock {\em Perfect groups}. With an appendix by W. Hanrath. Oxford Mathematical Monographs. Oxford Science Publications. The Clarendon Press, Oxford University Press, New York, 1989.

\bibitem{Hrus1}
E. Hrushovski.
\newblock {\em Stable group theory and approximate subgroups}. J. Amer. Math. Soc. 25 (2012), no. 1, 189--243. 

\bibitem{Hrus2}
E. Hrushovski.
\newblock {\em Class on Approximate subgroups: Notes on Hilbert $5$ and BGT}. Preprint (2012), http://www.ma.huji.ac.il/~ehud/notesH5-BGT.pdf 

\bibitem{Hrus3}
E. Hrushovski.
\newblock {\em On pseudo-finite dimensions}. Notre Dame J. Form. Log. 54 (2013), no. 3--4, 463--495.

\bibitem{Ilt}
R. Iltis.
\newblock {\em Some algebraic structure in the dual of a compact group}. Canad. J. Math. 20 (1968), 1499--1510.

\bibitem{Ked1}
K. S. Kedlaya.
\newblock {\em Product-Free Subsets of Groups}, Amer. Math. Monthly 105 (1998), no. 10, 900--906.

\bibitem{Ked2}
K. S. Kedlaya.
\newblock {\em Product-free subsets of groups, then and now},  Communicating mathematics, 169--177,  Contemp. Math., 479, Amer. Math. Soc., Providence, RI, 2009. 


\bibitem{LiSh}
M. Liebeck and A. Shalev.
\newblock {\em Diameters of finite simple groups: sharp bounds and applications}. Ann. of Math. (2) 154 (2001), no. 2, 383--406. 


\bibitem{MaTe}
D. Macpherson and K. Tent.
\newblock {\em Pseudofinite groups with NIP theory and definability in finite simple groups}. Groups and model theory, 255--267, Contemp. Math., 576, Amer. Math. Soc., Providence, RI, 2012. 

\bibitem{MaWa}
J. C. Massicot and F. O. Wagner.
\newblock {\em Approximate subgroups}. J. \'Ec. Polytech. Math. 2 (2015), 55--64. 

\bibitem{MOS}
S. Montenegro, A. Onshuus and P. Simon.
\newblock {\em Groups with f-generics in NTP$2$ and PRC fields}. J. Inst. Math. Jussieu (to appear) 	arXiv:1610.03150. 

\bibitem{NikPyb}
N. Nikolov and L. Pyber. 
\newblock {\em Product decompositions of quasirandom groups and a Jordan type theorem}. J. Eur. Math. Soc. 13 (2011), no. 4, 1063--1077. 

\bibitem{NST}
N. Nikolov, J. Schneider and A. Thom.
\newblock {\em Some remarks on finitarily approximable groups}. J. \'Ec. Polytech. Math. 5 (2018), 239--258.

\bibitem{Pil0}
A. Pillay.
\newblock {\em Type-definability, compact Lie groups, and o-minimality}. J. Math. Log. 4 (2004), no. 2, 147--162. 

\bibitem{Pil1}
A. Pillay.
\newblock {\em Remarks on compactifications of pseudofinite groups}. Fund. Math. 236 (2017), no. 2, 193--200.

\bibitem{PiScWa}
A. Pillay, T. Scanlon and F. Wagner.
\newblock {\em Supersimple fields and division rings}. Math. Res. Lett. 5 (1998), no. 4, 473--483. 

\bibitem{Poi}
F. Point.
\newblock {\em Ultraproducts and Chevalley groups}. Arch. Math. Logic 38 (1999), no. 6, 355--372. 

\bibitem{San}
T. Sanders.
\newblock {\em On a nonabelian Balog-Szemer\'edi-type lemma}. J. Aust. Math. Soc. 89 (2010), no. 1, 127--132. 

\bibitem{Yan}
Y. Yang.
\newblock {\em Ultraproducts of quasirandom groups with small cosocles}. J. Group Theory 19 (2016), no. 6, 1137--1164. 



\end{thebibliography}

\end{document}